\theoremstyle{plain}
\newtheorem{thm}{Theorem}[section]
\newtheorem{prop}[thm]{Proposition}
\newtheorem{lem}[thm]{Lemma}
\theoremstyle{definition}
\newtheorem{example}[thm]{Example}
\theoremstyle{remark}
\newcommand{\bbA}{\mathbb{A}}
\newcommand{\bbC}{\mathbb{C}} 
\newcommand{\bbF}{\mathbb{F}}
\newcommand{\bbH}{\mathbb{H}} 
\newcommand{\bbP}{\mathbb{P}}
\newcommand{\bbQ}{\mathbb{Q}}
\newcommand{\bbR}{\mathbb{R}} 
\newcommand{\bbS}{\mathbb{S}}
\newcommand{\PSL}{\text{PSL}}
\newcommand{\PGL}{\text{PGL}}
\newcommand{\berA}{\bbA^{1,{\rm an}}_k}
\newcommand{\berP}{\bbP^{1,{\rm an}}_k}
\newcommand{\bbT}{\mathbb{T}}
\newcommand{\bbZ}{\mathbb{Z}} 
\newcommand*{\defeq}{\mathrel{\vcenter{\baselineskip0.5ex \lineskiplimit0pt
			\hbox{\scriptsize.}\hbox{\scriptsize.}}}%
	=}
\let\c@equation\c@thm
\numberwithin{equation}{section}
\title{Basmajian's identity over non-Archimedean local fields}
\author{Yan Mary He}
\address{Department of Mathematics\\
	University of Oklahoma\\
	Norman, OK 73019}
\email{he@ou.edu}
\author{Chenxi Wu}
\address{Department of Mathematics, University of Wisconsin-Madison, Madison,
WI 53703}
\email{cwu367@wisc.edu}
\date{\today}
\begin{document}

\maketitle
\begin{abstract}
Let $\Sigma$ be a connected compact oriented surface with boundary and negative Euler characteristic. Let $k$ be a non-Archimedean local field. In this paper, we prove Basmajian's identity for projective Anosov representations $\rho \colon \pi_1\Sigma \to \PSL(d,k), d\ge 2$. Our series identity exhibits a drastic difference from all the Basmajian-type identities over the Archimedean fields $\mathbb{R}$ and $\mathbb{C}$. In particular, the series is a signed finite sum.
When $d=2$, we give a geometric proof of the identity using Berkovich hyperbolic geometry.
\end{abstract}

\section{Introduction}
If $\Sigma$ is a connected compact oriented hyperbolic surface with geodesic boundary $\partial\Sigma$, an orthogeodesic $\gamma$ on $\Sigma$ is a properly immersed geodesic arc that is perpendicular to $\partial\Sigma$ at both ends. In \cite{Bas}, Basmajian proved the following identity relating the sum of hyperbolic lengths of boundary geodesics to the lengths of orthogeodesics on $\Sigma$:
$$\ell(\partial\Sigma) = \sum_{\gamma} 2\log \coth^2 (\ell(\gamma)/2) $$
where the summation is taken over all orthogeodesics $\gamma$ on $\Sigma$. 

A hyperbolic structure on $\Sigma$ can be identified with a discrete faithful representation $\rho : \pi_1\Sigma \to \PSL(2,\bbR)$ acting on the hyperbolic plane as M\"obius transformations. For simplicity, we look for now surfaces with one boundary component. Up to
conjugation, we may assume that $\rho([\partial \Sigma])$ stabilizes the positive imaginary axis $L$. Orthogeodesics are in one-to-one correspondence with double cosets of $\pi_1(\partial \Sigma)$ in
$\pi_1\Sigma$ \cite{He, Vlamis}. For each nontrivial double coset $\pi_1(\partial \Sigma) w \pi_1(\partial \Sigma)$, the hyperbolic geodesic $w \cdot L$ corresponds to another boundary component of $\widetilde{\Sigma}$,
and the contribution to Basmajian's identity from this term is
$\log[\infty, 0, \rho(w)(\infty), \rho(w)(0)]$, where $[a,b,c,d]$ is the cross ratio on $\bbR\bbP^1$. Hence Basmajian's identity can be equivalently written as
$$\ell({\partial \Sigma}) = \sum_w \log[\infty, 0, \rho(w)(\infty), \rho(w)(0)]$$
where the summation is taken over all the double cosets $\pi_1(\partial \Sigma) w \pi_1(\partial \Sigma)$.

In this paper, instead of considering the Archimedean field $\bbR$, we consider projective Anosov representations $\rho : \pi_1\Sigma \to \PSL(d,k)$ where $d\ge2$ is an integer and $(k,|\cdot|)$ is a {\it non-Archimedean} local field with a non-trivial absolute value $|\cdot|$. Following Pozzetti-Sambarino-Wienhard \cite[Definition 3.1]{PSW}, we say that a homomorphism $\rho : \pi_1\Sigma \to \PSL(d,k)$ is {\it projective Anosov} if there exist positive constants $c, \mu$ such that for all $g \in \pi_1\Sigma$ we have
$$\frac{\sigma_{2}}{\sigma_1}(\rho(g)) \le ce^{-\mu|g|_w}$$
where $|g|_w$ is the word length of $g$ with respect to a fixed generating set. The numbers $\sigma_i$ are the {\it ratios of the semi-homothecy}; see Section \ref{sec_Anosov} for details. The main theorem of this paper is the following Basmajian-type identity for projective Anosov representations $\rho : \pi_1 \Sigma \to \PSL(d,k)$.

\begin{thm} \label{thm_realhigherBas}
Let $\Sigma$ be a connected compact oriented surface with boundary and negative Euler characteristic. Let $\alpha_1,\ldots, \alpha_m \in \pi_1\Sigma$ represent the free homotopy classes of boundaries $a_1,\ldots, a_m$. Let $d\ge 2$ be an integer and let $k$ be a non-Archimedean local field with a non-trivial absolute value $|\cdot|$. If $\rho: \pi_1\Sigma \to \PSL(d, k)$ is a projective Anosov representation, then we have
	\begin{equation} \label{eq_alg}
		\sum_{j=1}^m \ell_{\rho}(\alpha_j) = \sum_{j,q =1}^m \sum_{w \in \mathcal{L}_{j,q}} \log_v \left| C_{ \rho}(\alpha_j^+,\alpha_j^-;w\cdot \alpha_q^+,w \cdot \alpha_q^-) \right|
	\end{equation}
	where $\alpha_j^+,\alpha_j^- \in \partial \pi_1\Sigma$ are the attracting and repelling fixed points of $\alpha_j$, respectively. 
\end{thm}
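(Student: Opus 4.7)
The plan is to mimic the classical Archimedean proof of Basmajian's identity, which decomposes each boundary circle into arcs indexed by orthogeodesics, but replacing measure-theoretic or metric arguments by algebraic ones that leverage the ultrametric nature of $|\cdot|$. The projective Anosov hypothesis furnishes equivariant limit maps $\xi : \partial \pi_1\Sigma \to k\bbP^{d-1}$ and $\xi^* : \partial \pi_1 \Sigma \to (k\bbP^{d-1})^*$, from which both $\ell_\rho(\alpha)$ and the cross-ratio $C_\rho$ should be expressible: $\ell_\rho(\alpha)$ as $\log_v|\lambda_1(\rho(\alpha))/\lambda_2(\rho(\alpha))|$, and $C_\rho(a,b;c,d)$ as the natural four-point invariant built from $\xi,\xi^*$. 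The first step is to unpack these definitions and verify a translation formula of the form
\[
\log_v\bigl|C_\rho(\alpha^+,\alpha^-;\rho(\alpha)\cdot x,\rho(\alpha)\cdot y)\bigr| - \log_v\bigl|C_\rho(\alpha^+,\alpha^-;x,y)\bigr| = \ell_\rho(\alpha),
\]
which says that $\log_v|C_\rho(\alpha^+,\alpha^-;\cdot,\cdot)|$ behaves as an additive Busemann-type function along the axis of $\rho(\alpha)$. This is essentially linear algebra in the Jordan form of $\rho(\alpha)$ together with basic properties of the non-Archimedean absolute value.

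Next, for each boundary component $a_j$ I would enumerate the lifts of all boundary components of $\Sigma$ as seen from the chosen lift of $a_j$ in $\widetilde{\Sigma}$: these are indexed by the double cosets $\mathcal{L}_{j,q} \subset \langle\alpha_j\rangle\backslash\pi_1\Sigma/\langle\alpha_q\rangle$. Using a Dirichlet-style fundamental domain for $\langle\alpha_j\rangle$ acting on the boundary $\partial \pi_1\Sigma$ equipped with its cyclic order, one matches each representative $w$ to its pair of endpoints $(w\cdot\alpha_q^+, w\cdot\alpha_q^-)$ and telescopes the resulting cross-ratio contributions via the translation formula above to recover $\ell_\rho(\alpha_j)$. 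Summing over $j = 1,\dots,m$ then yields \eqref{eq_alg}.

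The critical feature of the non-Archimedean setting, and the source of the ``signed finite sum'' property emphasized in the abstract, is the following: the Anosov estimate $\sigma_2/\sigma_1(\rho(w)) \le ce^{-\mu|w|_w}$ forces the cross-ratio $C_\rho(\alpha_j^+,\alpha_j^-;w\cdot\alpha_q^+,w\cdot\alpha_q^-)$ to converge $p$-adically to $1$ as $|w|_w \to \infty$. Because $|k^\times| \subset \bbR_{>0}$ is discrete, we get $|C_\rho| = 1$ and hence $\log_v|C_\rho| = 0$ for all but finitely many $w$, automatically truncating the series to a finite sum; moreover, the contributing terms can have $|C_\rho| < 1$ or $|C_\rho| > 1$, producing summands of either sign, in contrast to the Archimedean case where every term is positive. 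The main obstacle I anticipate is the precise combinatorial matching between double cosets and boundary segments with correct signs: in the Archimedean case this is implemented by perpendicular feet of orthogeodesics on $\bbH^2$, a structure with no direct analog for $\PSL(d,k)$ in general. I expect that the action on $k\bbP^{d-1}$ via $\xi,\xi^*$ provides enough structure for the combinatorial argument in general, while for $d=2$ the Berkovich tree action alluded to in the final sentence of the abstract furnishes the geometric interpretation that makes the matching (and the signs) transparent.
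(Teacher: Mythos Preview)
Your proposal matches the paper's approach almost exactly: the paper fixes a reference point $z$, defines the single-variable function $\Phi_\rho(x)=\log_v|C_\rho(\alpha_j^+,\alpha_j^-,x,z)|$, proves $\Phi_\rho(\alpha_j x)=\Phi_\rho(x)-\ell_\rho(\alpha_j)$ via the period identity, uses continuity of the limit maps together with discreteness of $|k^\times|$ to show all but finitely many gap intervals $\tilde I_\beta=(\beta^+,\beta^-)$ have $\Phi_\rho(\beta^+)=\Phi_\rho(\beta^-)$, and then telescopes over the circle $\bbR/\ell_\rho(\alpha_j)\bbZ$ indexed by double cosets, with the $d=2$ case redone geometrically via projection to the axis in the Berkovich tree. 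One small correction: in your displayed translation formula you should shift only one of the last two arguments by $\alpha$, not both---by equivariance $C_\rho(\alpha^+,\alpha^-;\alpha x,\alpha y)=C_\rho(\alpha^+,\alpha^-;x,y)$, so the difference as you wrote it vanishes, whereas $\log_v|C_\rho(\alpha^+,\alpha^-;x,\alpha x)|=\ell_\rho(\alpha)$ is the period identity that drives the telescoping.
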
 
Here $\ell_{\rho}$ and $C_{\rho}$ denote certain length and cross ratio associated to $\rho$ respectively, and $\log_v$ is the logarithm function with base $p > 1$ being the cardinality of the residue field of $k$. The set $\mathcal{L}_{j,q}$ is a set of double coset representatives associated to boundary components $a_j$ and $a_q$. We note that if we put an ordering on a symmetric generating set of $\pi_1\Sigma$, each double coset $\pi_1(\partial \Sigma) w \pi_1(\partial \Sigma)$ has a {\it unique} representative $w$; namely, the shortest in word length and the largest with respect to the ordering.

It is worth pointing out that the series on the right hand side of \eqref{eq_alg} is in fact a {\it finite signed} sum. We will show in Section \ref{sec_examples} that there are projective Anosov represenations for which the right hand side series of Theorem \ref{thm_realhigherBas} consists of only positive terms, as well as representations for which the right hand side series consists of both positive and negative terms.
This is a major difference from the identities over Archimedean fields where the series involved is always an {\it infinite} sum of {\it positive} terms. 

Before we outline the strategy of the proof of the theorem, we mention related work on identities in hyperbolic geometry, higher Teichm\"uller theory and super Teichm\"uller theory. We note that all these identities are for representations over Archimedean fields. For identities on hyperbolic manifolds, we refer the reader to the survey paper \cite{BT} and the references therein. The first author extended Basmajian's identity to certain Schottky representations \cite{He}. Vlamis-Yarmola established Basmajian's identity for Hitchin representations \cite{Vlamis}. The first author established Basmajian's identity for hyperconvex Anosov representations \cite{He2}. Fanoni-Pozzetti obtained Basmajian-type inequalities for maximal representations \cite{PSW}. Moreover, there is recent progress in generalizing McShane's identity to higher Teichm\"uller theory \cite{LabMc, Zhe} and super Teichm\"uller theory \cite{HPZ}.

\subsection{Strategy of the proof}
We first prove Theorem \ref{thm_realhigherBas} for $d=2$ using Berkovich hyperbolic geometry and then give a proof for the general case. 

If $k$ is a non-Archimedian local field which is complete with respect to a non-trivial absolute value $|\cdot|$, then the topology on $k$ induced by the given absolute value is Hausdorff but it is totally disconnected. Berkovich's theory allows us to overcome this difficulty coming from the topology of $k$ and study analytic objects on $k$. The Berkovich affine line $\bbA^{1,an}_k$ is a locally compact, Hausdorff and path-connected topological space which contains $k$, with the topology induced by $|\cdot|$, as a dense subspace. The Berkovich projective line $\bbP^{1,an}_k$ is obtained by adding the point $\infty$ to compactify $\bbA^{1,an}_k$. Therefore, the Berkovich projective line $\bbP^{1,an}_k$ is a compact, Hausdorff and path-connected topological space which contains the projective line $\bbP^1(k)$ as a dense subspace. Moreover, $\bbP^{1,an}_k$ admits a tree structure and it is in fact an $\bbR$-tree. Based on the tree structure, the boundary of $\bbP_k^{1,an}$ is $\bbP^1(k)$ and the set $\bbH_k \defeq \bbP_k^{1,an} \setminus \bbP^1(k)$ is called the {\it Berkovic hyperbolic space}. 

The group $\PGL(2,k)$ of M\"obius transformations acts continuously on $\bbP^{1,an}_k$ in a natural way which is compatible with its action on $\bbP^1(k)$. This action preserves the Berkovic hyperbolic space $\bbH_k$, and moreover, there exists a metric $d_{\bbH}$ on $\bbH_k$ such that $\PGL(2,k)$ acts by isometries.

Let $\Sigma$ be a surface as in Theorem \ref{thm_realhigherBas}. A projective Anosov representation $\rho : \pi_1\Sigma \to \PSL(2,k)$ gives rise to an action of $\rho(\pi_1\Sigma)$ on $\bbH_k$. Since $\rho : \pi_1\Sigma \to \PSL(2,k)$ is projective Anosov, it is discrete and faithful, and for every $g \in \pi_1\Sigma$, the matrix $\rho(g)$ is hyperbolic as an element in $\PGL(2,k)$. The {\it limit set} of $\rho$ is the image of the Gromov boundary $\partial\pi_1\Sigma$ under the limit map $\zeta^1_{\rho} : \partial\pi_1\Sigma \to \bbP^1(k)$.

The general idea of proving Theorem \ref{thm_realhigherBas} when $d=2$ is analogous to Basmajian's original proof. Namely, for each boundary component $a_j$ of the surface, the hyperbolic length of a fundamental domain on the axis ${\rm Axis}(\alpha_j)$ of $\alpha_j$ is equal to the sum of the {\it projections} onto ${\rm Axis}(\alpha_j)$ of the translates $g\cdot {\rm Axis}(\alpha_j)$, where $g$ ranges over representatives of double cosets $\langle \alpha_j \rangle \backslash \pi_1\Sigma / \langle \alpha_q \rangle$. However, the key difference of the proof from that for Archimedean fields is that only finitely many projections are non-zero. This is due to the {\it non-Archimedean} feature of the absolute value of the field. Moreover, different from the case of identities over Archimedean fields, the length of the projections are {\it signed}, which may not always be positive as in the case of identities over Archimedean fields. 

The proof of Theorem \ref{thm_realhigherBas} in the general case does not use any geometric argument. Instead, for each boundary component $a_j$ of the surface, we interpret the length $\ell_\rho(\alpha_j)$ as the length of a fundamental domain on the real line $\mathbb{R}$ which plays the role of the axis ${\rm Axis}(\alpha_j)$ of $\alpha_j$. Then we show that the length of this fundamental domain is equal to the sum of lengths, one for each double coset in $\langle \alpha_j \rangle \backslash \pi_1\Sigma / \langle \alpha_q \rangle$. Similar to the case $d=2$, this sum is finite and some terms in the summation may be negative.
We will give examples of Anosov representations in Section \ref{sec_examples} to illustrate this phenomena.

\subsection{Organization of the paper} 
The paper is organized as follows. In Section 2, we review and prove preliminary results about Anosov representations over non-Archimedean local fields. In Section 3, we review and prove preliminary results about the Berkovich projective line and the Berkovich hyperbolic space. In Section 4, we prove Theorem \ref{thm_realhigherBas}. In Section 5, we give examples to demonstrate Theorem \ref{thm_realhigherBas}.

\subsection*{Acknowledgement} The first author would like to thank Charles Favre for organizing the arithmetic dynamics seminar at MSRI in Spring 2022 and for being inspiring.

\section{Anosov representations over non-Archimedean local fields}
In this section, we first give a quick introduction to non-Archimedean local fields in Section \ref{sec_field}. In Section \ref{sec_Anosov}, we give a brief overview of definitions and properties of Anosov representations $\rho : \Gamma \to \PGL(d,k)$ from a Gromov hyperbolic group $\Gamma$ into $\PGL(d,k)$ where $k$ is a non-Archimedean local field. We then discuss a {\it cross ratio} on the Gromov boundary of $\Gamma$ associated to a projective Anosov representation in Section \ref{subsec_cr}.

\subsection{Non-Archimedean local fields}\label{sec_field}
In this section, we collect the basics of non-Archimedean local fields. Standard references are \cite{Cas, Serre}.

Let $k$ be a field and let $k^{\times} = k \setminus \{0\}$. An {\it absolute value} on $k$ is a map $|\cdot| : k \to \bbR_{\ge 0}$ such that
$$|x|=0 \iff x=0; |xy|=|x|\cdot |y|, \forall x,y \in k; \text{ and } |x+y| \le |x|+|y|.$$
An absolute value on $k$ is called {\it non-Archimedean} if it also satisfies the ultrametric inequality $$|x+y| \le \max\{|x|,|y| \}.$$ Otherwise the absolute value is called {\it Archimedean}. 

The absolute value $|\cdot|$ on $k$ induces a metric on $k$; namely $d(x,y) \defeq |x-y|$ for any $x,y \in k$. A field is {\it complete} if every Cauchy sequence has a limit point in the field. It is a classical theorem that if $k$ is complete with respect to an Archimedean absolute value, then $k$ is isomorphic to $\bbR$ or $\bbC$ and the absolute value is equivalent to the usual absolute value. 

A {\it valuation} on a field $k$ is a group homomorphism $v : k^{\times} \to \mathbb R$ such that for all $x,y \in k$, we have $$v(x+y) \ge \min \{v(x),v(y)\}.$$ We may extend $v$ to a map $k \to \mathbb{R} \cup \{\infty\}$ by setting $v(0) = \infty$. For any $c \in (0,1)$, $|\cdot| \defeq c^{v(\cdot)}$ gives a non-Archimedean absolute value. The image of $v$ in $\mathbb R$ is called the {\it value group} and is denoted by $|k^{\times}|$. We say that $v$ is a {\it discrete valuation} if the value group is equal to $\mathbb Z$. The set $k^{\circ} \defeq \{ x \in k : v(x) \ge 0\}$ is the {\it valuation ring} of $k$ (with respect to $v$). It is a {\it discrete} valuation ring if $v$ is discrete.
If $k^{\circ}$ is a discrete valuation ring, any element $\pi \in k^{\circ}$ with $v(\pi) = 1$ is called a {\it uniformizer}. The ideal generated by $\pi$ is the unique maximal ideal of $k^{\circ}$. The {\it residue field} of $k$ is the field $k^{\circ}/\pi k^{\circ}$.

A {\it local field} is a field $k$ which is complete with respect to a nontrivial absolute value induced by a discrete valuation and whose residue field is finite.
An Archimedean local field is isomorphic to either $\mathbb R$ or $\mathbb{C}$.
A non-Archimedean local field is isomorphic to either a finite extension of the $p$-adic numbers $\bbQ_p$ if it has characteristic zero, or the field of formal Laurent series $\bbF_q((T))$ over a finite field $\bbF_q$ where $q$ is a power of a prime number $p$ if it has characteristic $p$.

\subsection{Projective Anosov representations} \label{sec_Anosov}
Let $\Gamma$ be a Gromov hyperbolic group. Let $(k,|\cdot|)$ be a non-Archimedean local field. Let $d \ge 2$ be an integer and let $p$ be an integer with $1 \le p \le d-1$. Following Pozzetti-Sambarino-Wienhard \cite[Definition 3.1]{PSW} which generalizes the work of Bochi-Potrie-Sambarino \cite{BPS}, we say that a homomorphism $\rho : \Gamma \to \PGL(d,k)$ is {\it $p$-Anosov} if there exist positive constants $c, \mu$ such that for all $\gamma \in \Gamma$ we have
$$\frac{\sigma_{p+1}}{\sigma_p}(\rho(\gamma)) \le ce^{-\mu|\gamma|_w}$$
where $|\gamma|_w$ is the word length of $\gamma$ with respect to a fixed generating set. The numbers $\sigma_i$ are defined as follows. Let $V$ be a $k$-vector space (e.g. $V = k^d$). Given a $k$-norm $||\cdot||_{V, k}$ on $V$, we say that $g \in {\rm GL}(V,k)$ is a {\it semi-homothecy} if there exists a $g$-invariant $k$-orthogonal decomposition $V = V_1 \oplus...\oplus V_m$ and $\sigma_1,...,\sigma_m \in \bbR_{>0}$ such that for every $i=1,...,m$ and every $v_i \in V_i$ one has $$||gv_i||_{V, k} = \sigma_i ||v_i||_{V, k}.$$ The numbers $\sigma_i$ are called the {\it ratios of the semi-homothecy}. A $1$-Anosov representation is called a {\it projective Anosov} representation.

An important property of Anosov representations is that they admit equivariant transverse limit maps given by the following proposition. 

\begin{prop}[{\cite[Proposition 3.5]{PSW}}]
If $\rho : \Gamma \to \PGL(d,k)$ is $p$-Anosov, then for any geodesic ray $\{\gamma_n\}_{n \ge 1}$ in (the Cayley graph of) $\Gamma$ with endpoint $x$ in the Gromov boundary $\partial\Gamma$, the limits
	$$\zeta^p_{\rho}(x) \defeq \lim_{n \to \infty} U_p(\rho(\gamma_n))$$
	$$\zeta^{k-p}_{\rho}(x) \defeq \lim_{n \to \infty} U_{n-p}(\rho(\gamma_n))$$
	exist and do not depend on the ray. Moreover, they define continuous $\rho$-equivariant transverse maps $\zeta^p : \partial\Gamma \to \mathcal{G}_p(k^d)$ and $\zeta^{d-p} : \partial\Gamma \to \mathcal{G}_{d-p}(k^d)$, where  $\mathcal{G}_j(k^d)$ denotes the Grassmannian $j$-planes in $k^d$ for $j=p, n-p$. Here $U_p(\rho(\gamma))$ denotes the Cartan attractor of $\rho(\gamma)$ (see \cite[ p. 9]{PSW}), i.e.
	$$U_{p}(\rho(\gamma)) = \beta \cdot (e_1 \oplus ... \oplus e_{p})$$ with $\rho(\gamma) = \beta a\beta'$ coming from the Cartan $KAK$ decomposition of $GL(d, k)$ and $\{e_j\}_{j=1}^d$ being the standard basis for the vector space $k^d$.
\end{prop}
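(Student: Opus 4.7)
The plan is to follow the Bochi--Potrie--Sambarino strategy (in its non-Archimedean adaptation due to Pozzetti--Sambarino--Wienhard), whose mechanism is a cone-contraction estimate on the Grassmannian. First I would fix a $\PGL(d,k)$-invariant (pseudo-)distance on $\mathcal{G}_p(k^d)$; over a non-Archimedean field this is conveniently done via the ultrametric analogue of the Fubini--Study distance, where the distance between two $p$-planes is read off from the absolute values of Pl\"ucker coordinates of their normalised representatives. The preparatory lemma I need then reads: there exist constants $A,\delta>0$ such that whenever $g\in\mathrm{GL}(d,k)$ satisfies $\sigma_{p+1}(g)/\sigma_p(g)<\delta$, the attractor $U_p(g)$ is the unique $p$-plane nearly fixed by $g$, and the induced action $Q\mapsto g\cdot Q$ contracts a neighbourhood of $U_p(g)$ by at most $A\cdot\sigma_{p+1}(g)/\sigma_p(g)$. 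This is proved using the $KAK$-decomposition, reducing to the action of a diagonal matrix on a bounded perturbation of $e_1\oplus\cdots\oplus e_p$; the ultrametric inequality makes the resulting linear-algebra estimate cleaner than its Archimedean counterpart.

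Given this estimate, the convergence statement proceeds as follows. For a geodesic ray $\{\gamma_n\}$ with $|\gamma_n|_w=n$, the $p$-Anosov hypothesis yields $\sigma_{p+1}(\rho(\gamma_n))/\sigma_p(\rho(\gamma_n))\le c\,e^{-\mu n}$. Applying the contraction lemma to the factorisation $\rho(\gamma_{n+1})=\rho(\gamma_n)\cdot\rho(\gamma_n^{-1}\gamma_{n+1})$, whose right-hand factor has norm bounded in terms of the generating set, shows that $\{U_p(\rho(\gamma_n))\}$ is Cauchy in the compact Hausdorff space $\mathcal{G}_p(k^d)$, hence convergent to a limit $\zeta^p_\rho(x)$. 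Independence of the ray comes from the Gromov-hyperbolic fellow-traveller property for rays sharing an endpoint together with the same contraction estimate; uniformising over base points yields continuity of $\zeta^p_\rho$ on $\partial\Gamma$. Equivariance is immediate from the defining limit, because $\{\gamma\cdot\gamma_n\}$ is a quasi-geodesic converging to $\gamma\cdot x$ and $U_p(\rho(\gamma)\rho(\gamma_n))=\rho(\gamma)\cdot U_p(\rho(\gamma_n))$. The statement for $\zeta^{d-p}_\rho$ follows by dualising: a $p$-Anosov representation has $\sigma_{d-p+1}/\sigma_{d-p}$-gap exponential in word length, since this ratio equals $\sigma_{p+1}/\sigma_p$ applied to the inverse.

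The main obstacle, in my view, will be transversality: $\zeta^p_\rho(x)\oplus\zeta^{d-p}_\rho(y)=k^d$ for $x\ne y$. My plan is to pick geodesic rays $\{\gamma_n\}\to x$ and $\{\eta_n\}\to y$ whose common prefix in the Cayley graph has bounded length, so that $|\gamma_n^{-1}\eta_n|_w\to\infty$, and to track the pair $(U_p(\rho(\gamma_n)),\,U_{d-p}(\rho(\eta_n)))$ using a two-sided variant of the contraction lemma. The Cartan decompositions of $\rho(\gamma_n)$ and $\rho(\eta_n)$ push a fixed generic transverse reference pair toward the desired pair, and the exponential Anosov gap at indices $p$ and $d-p$ forces transversality to persist in the limit. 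The non-Archimedean setting is actually helpful here, because transversality is detected by a single $d\times d$ determinant having absolute value bounded below, and under an ultrametric norm this open condition is preserved by the very strong Cauchy convergence provided by the contraction estimate.
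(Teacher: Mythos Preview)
The paper does not prove this proposition; it is quoted from \cite[Proposition~3.5]{PSW} and used as a black box. There is therefore no argument in the paper to compare against. Your sketch does track the Bochi--Potrie--Sambarino mechanism that underlies the cited reference, and the overall architecture---a cone-contraction estimate on $\mathcal{G}_p(k^d)$ derived from the $KAK$ decomposition, a Cauchy estimate driven by the exponential singular-value gap, fellow-travelling for ray-independence, and a determinant lower bound for transversality---is the right one.

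There is one genuine slip. In your equivariance step you assert
\[
U_p\bigl(\rho(\gamma)\rho(\gamma_n)\bigr)=\rho(\gamma)\cdot U_p\bigl(\rho(\gamma_n)\bigr).
\]
This is false as a pointwise identity: the Cartan attractor is read off from the $KAK$ factorisation of the entire product, and left multiplication by a fixed $\rho(\gamma)$ does not simply translate the left $K$-factor. What \emph{is} true, and what you actually need, is the asymptotic statement that the distance between $U_p(\rho(\gamma)\rho(\gamma_n))$ and $\rho(\gamma)\cdot U_p(\rho(\gamma_n))$ tends to zero as $n\to\infty$. This follows from the same contraction lemma you set up: once $\sigma_{p+1}/\sigma_p(\rho(\gamma_n))$ is small, any $p$-plane obtained by applying a bounded element to $U_p(\rho(\gamma_n))$ lies in the contracted neighbourhood, and both sides of your claimed identity are such planes. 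With that correction the equivariance passes to the limit as you intend.
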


If $\rho : \Gamma \to \PGL(d,k)$ is projective Anosov, then the set $\zeta_{\rho}^1(\partial\Gamma) \subset \bbP(k^d)$ is called the {\it limit set} of $\rho$. 

\subsection{Cross ratios} \label{subsec_cr}
Let $\Sigma$ be a connected compact oriented surface with boundary and negative Euler characteristic. In this subsection, we define a cross ratio for four points on the Gromov boundary $\partial\pi_1\Sigma$ of the hyperbolic group $\pi_1\Sigma$ which is a natural generalization of Labourie's cross ratio; see \cite{Lab07, LabMc}. This is achieved by using the transverse limit maps of a projective Anosov representation which take points of $\partial\pi_1\Sigma$ into the projective spaces, and then taking the cross ratio considered by Falbel-Guilloux-Will \cite{FGW} that we introduce now.

Let $k$ be any field. Let $d \ge 2$ be an integer and denote by $k^{d*}$ the dual of the vector space $k^{d}$. Consider two non-empty sets $\Omega \subset \bbP (k^{d})$ and $\Lambda \subset \bbP(k^{d*})$ such that $$\varphi(\omega) \neq 0$$ for any $\omega \in \Omega$ and $\varphi \in \Lambda$. In other words, $\Omega$ is disjoint from all hyperplanes defined by points in $\Lambda$.

Let $(\varphi, \varphi', \omega, \omega')\in \Lambda^2 \times \Omega^2$ with $\omega \neq \omega'$ so that $\omega$ and $\omega'$ span a projective line $(\omega\omega')$. In \cite{FGW}, Falbel-Guilloux-Will considered the following cross ratio
$$[[\varphi, \varphi', \omega, \omega']] \defeq [\varphi_{\omega,\omega'}, \varphi'_{\omega,\omega'}, \omega, \omega']$$ where $\varphi_{\omega,\omega'}$ and $\varphi'_{\omega,\omega'}$ are the points $\text{ker}(\varphi) \cap (\omega\omega')$ and $\text{ker}(\varphi') \cap (\omega\omega')$ in $(\omega\omega')$ respectively, and $[w_1,w_2,w_3,w_4]$ is the usual cross ratio for four distinct points in $\bbP^1(k) = (\omega\omega')$, namely $[w_1,w_2,w_3,w_4] = (w_1-w_3)(w_2-w_4)/(w_1-w_4)(w_2-w_3)$.

The cross ratio satisfies the following properties.
\begin{lem} \label{lem_cr_FGW}
	Given $(\varphi, \varphi', \omega, \omega')\in \Lambda^2 \times \Omega^2$, the cross ratio $[[\varphi, \varphi', \omega, \omega']]$ satisfies 
	\begin{equation} \label{eq_crossratio}
		[[\varphi, \varphi', \omega, \omega']] = \frac{\bar{\varphi}(\bar{\omega})\bar{\varphi}'(\bar{\omega}')}{\bar{\varphi}(\bar{\omega}')\bar{\varphi}'(\bar{\omega})}
	\end{equation}
	where $\bar{\varphi}, \bar{\varphi}', \bar{\omega}$ and $\bar{\omega}'$ are any lifts of $\varphi, \varphi', \omega$ and $\omega'$. Moreover, if $\omega, \omega', \omega''$ are three points in $\Omega$ and $\varphi, \varphi', \varphi''$ are three points in $\Lambda$, we have
	\begin{enumerate}
		\item $[[\varphi, \varphi', \omega, \omega']] = 1$ if and only if $\varphi = \varphi'$ or $\omega = \omega'$,
		\item $[[\varphi, \varphi', \omega, \omega']] = [[\varphi, \varphi', \omega, \omega'']]\cdot [[\varphi, \varphi', \omega'', \omega']]$, and
		\item $[[\varphi, \varphi', \omega, \omega']] = [[\varphi, \varphi', \omega', \omega]]^{-1}$.
	\end{enumerate}
\end{lem}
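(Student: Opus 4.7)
The strategy is to first establish the closed-form expression (2.1) by a direct coordinate computation on the projective line $(\omega\omega')$, and then read off properties (1)--(3) as formal consequences of that formula.

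To derive (2.1), I would fix lifts $\bar\varphi,\bar\varphi',\bar\omega,\bar\omega'$ and use the linear parametrization $[s:t]\mapsto [s\bar\omega+t\bar\omega']$ to identify $(\omega\omega')\cong \bbP^1(k)$, sending $\omega\mapsto[1:0]$ and $\omega'\mapsto[0:1]$. The hypothesis that $\bar\varphi$ and $\bar\varphi'$ do not vanish on either $\bar\omega$ or $\bar\omega'$ forces them to restrict to non-zero functionals on $\mathrm{span}(\bar\omega,\bar\omega')$, so each kernel meets $(\omega\omega')$ in a single point. Solving $\bar\varphi(s\bar\omega+t\bar\omega')=0$ gives $\varphi_{\omega,\omega'}=[\bar\varphi(\bar\omega'):-\bar\varphi(\bar\omega)]$, and similarly for $\varphi'_{\omega,\omega'}$. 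Passing to the affine coordinate $s/t$ on $\bbP^1(k)$ (so that $\omega\mapsto\infty$ and $\omega'\mapsto 0$) and invoking the elementary identity $[w_1,w_2,\infty,0]=w_2/w_1$ produces exactly the right-hand side of (2.1). Since each of the four quantities $\bar\varphi,\bar\varphi',\bar\omega,\bar\omega'$ appears once in the numerator and once in the denominator, the formula is manifestly independent of the choice of lifts.

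Granted (2.1), property (2) is an immediate cancellation of the $\bar\omega''$-dependent factors in the product on the right, and property (3) follows by swapping the roles of $\bar\omega$ and $\bar\omega'$, which interchanges numerator and denominator. Property (1) is the only one demanding a moment's thought. The implication $(\Leftarrow)$ is transparent: replacing $\bar\varphi'$ by a scalar multiple of $\bar\varphi$ (respectively $\bar\omega'$ by a scalar multiple of $\bar\omega$) makes the numerator and denominator of (2.1) coincide. For $(\Rightarrow)$, equality of the two sides is the vanishing of the $2\times 2$ determinant $\bar\varphi(\bar\omega)\bar\varphi'(\bar\omega')-\bar\varphi(\bar\omega')\bar\varphi'(\bar\omega)$, which, together with the non-vanishing of each entry, forces $\bar\varphi$ and $\bar\varphi'$ to be proportional on $\mathrm{span}(\bar\omega,\bar\omega')$, equivalently $\varphi_{\omega,\omega'}=\varphi'_{\omega,\omega'}$ on $(\omega\omega')$.

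Overall the computation is routine; the main obstacle, such as it is, is being careful with the cross-ratio conventions when specialising to $\omega=\infty,\omega'=0$ and verifying the scale-invariance of the right-hand side of (2.1). Once (2.1) is in hand, the three listed properties are purely algebraic consequences requiring no further geometric input.
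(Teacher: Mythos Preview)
Your argument is correct and proceeds by the same underlying mechanism as the paper's---an explicit coordinate computation on the line $(\omega\omega')$---but the two parametrizations are dual. The paper first disposes of the case $\varphi=\varphi'$ by inspection, then (for $\varphi\neq\varphi'$) chooses a basis $\{e_m\}$ of the ambient space with $\bar\varphi=e_1^*,\ \bar\varphi'=e_2^*$, writes $\bar\varphi_{\omega,\omega'}, \bar\varphi'_{\omega,\omega'}$ in that basis, and then expands $\bar\omega,\bar\omega'$ in the plane basis $\{\bar\varphi'_{\omega,\omega'},\bar\varphi_{\omega,\omega'}\}$ to read off the classical cross ratio. You instead parametrize $(\omega\omega')$ directly by $\bar\omega,\bar\omega'$ and solve the linear equations $\bar\varphi(s\bar\omega+t\bar\omega')=0$ to locate $\varphi_{\omega,\omega'},\varphi'_{\omega,\omega'}$ in those coordinates. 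Your route avoids both the case split and the choice of an ambient basis of $k^d$, at the harmless cost of fixing an affine chart on $\bbP^1$; the paper's route makes the role of $\varphi,\varphi'$ more symmetric. Either way the verification of (2) and (3) is identical.

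One small point on (1): your implication $(\Rightarrow)$ concludes only that $\bar\varphi$ and $\bar\varphi'$ are proportional on $\mathrm{span}(\bar\omega,\bar\omega')$, i.e.\ $\varphi_{\omega,\omega'}=\varphi'_{\omega,\omega'}$, which is exactly what formula (\ref{eq_crossratio}) delivers and is the natural reading of the statement. The paper does not go further either (it records (1)--(3) as ``straightforward computations''), and in the only downstream use (Lemma~\ref{lem_pos}) the limit maps of the Anosov representation supply the extra injectivity needed, so this is not a gap in your argument.
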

\begin{proof}
	We give a sketch of the proof here and refer the interested reader to {\cite[Lemma 2.3, Proposition 2.4]{FGW}}. Equation (\ref{eq_crossratio}) is obvious if $\varphi = \varphi'$.
	Suppose $\varphi \neq \varphi'$, we choose a basis $\{e_m\}_{m=1}^{n}$ of the vector space $k^n$ such that the lifts $\bar{\varphi}, \bar{\varphi}', \bar{\varphi}_{\omega,\omega'}$ and $\bar{\varphi}'_{\omega,\omega'}$ (in $k^{n*}$ and $k^{n}$) are as follows:
	$$\bar{\varphi} = e_1^*, \bar{\varphi}' = e_2^*, \bar{\varphi}_{\omega,\omega'} = e_2 + \bar{u}, \bar{\varphi}'_{\omega,\omega'} = e_1 + \bar{u}'$$
	where $\bar{u}$ and $\bar{u}'$ are vectors in Span$(e_3, \cdots, e_n)$. Note that $\bar{\varphi}_{\omega,\omega'}$ and $\bar{\varphi}'_{\omega,\omega'}$ form a basis for the plane spanned by $\bar{\omega}$ and $\bar{\omega}'$. Therefore $\bar{\omega}$ and $\bar{\omega}'$ can be written as
	$$\bar{\omega} =  \lambda'\bar{\varphi}'_{\omega,\omega'} + \lambda\bar{\varphi}_{\omega,\omega'} \text{ and } \bar{\omega}' = \mu'\bar{\varphi}'_{\omega,\omega'} + \mu\bar{\varphi}_{\omega,\omega'}.$$
	
	With respect to the coordinate basis $\{\bar{\varphi}'_{\omega,\omega'}, \bar{\varphi}_{\omega,\omega'}\}$, in the projective line $(\omega\omega')$, $\varphi_{\omega,\omega'}$ has projective coordinate $[1, \infty = 1/0]$, $\varphi'_{\omega,\omega'} = [1, 0]$, $ \omega = [\lambda',\lambda] = [1; \lambda/\lambda']$ and $ \omega' = [\mu',\mu] = [1; \mu/\mu']$.
	
	Therefore, we have
	$$\frac{\bar{\varphi}(\bar{\omega})\bar{\varphi}'(\bar{\omega}')}{\bar{\varphi}(\bar{\omega}')\bar{\varphi}'(\bar{\omega})} = \frac{\mu\lambda'}{\mu'\lambda} =[ \varphi_{\omega,\omega'},\varphi'_{\omega,\omega'}, \omega, \omega']$$
	proving equation (\ref{eq_crossratio}).
	
	Identities (1)-(3) can be verified by straightforward computations.
\end{proof}

Now we define a cross ratio for four points on the boundary $\partial\pi_1\Sigma$. Let $\rho : \pi_1\Sigma \to \PGL(d, k)$ be a projective Anosov representation with limit maps $\zeta^1$ and $\zeta^{d-1}$. We denote by $(\partial\pi_1\Sigma)^{4*}$ the set
$$(\partial\pi_1\Sigma)^{4*} \defeq \{(x,y,u,v) \in (\partial\pi_1\Sigma)^{4} : x \neq v, y \neq u\}.$$
The {\it cross ratio} is a continuous function $C_{\rho} : (\partial\pi_1\Sigma)^{4*} \to \bbR$ given by
$$C_{\rho}(x,y,u,v) = [[\zeta^{d-1}(x), \zeta^{d-1}(y), \zeta^1(u), \zeta^1(v)]].$$
We note that if $\rho : \pi_1\Sigma \to \PSL(d,\bbR)$ is Hitchin, then $C_{\rho}$ is Labourie's cross ratio.


\begin{lem} \label{lem_pos}
	Let $\rho : \pi_1\Sigma \to \PGL(d,k)$ be a projective Anosov representation. The cross ratio $C_{\rho}$ satisfies the following properties
	\begin{enumerate}
		\item $C_{\rho}(x,y,u,v) = 0 \iff x = u$ or $y=v$.
		\item $C_{\rho}(x,y,u,v) = 1 \iff x = y$ or $u=v$.
		\item $C_{\rho}(x,y,u,v) = C_{\rho}(x,y,u,v')C_{\rho}(x,y,v',v)$
		\item $C_{\rho}(x,y,u,v) = C_{\rho}(x,y,v,u)^{-1}$
	\end{enumerate}
\end{lem}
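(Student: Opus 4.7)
The plan is to deduce each of the four properties of $C_\rho$ directly from the formula \eqref{eq_crossratio} and the corresponding identities in Lemma \ref{lem_cr_FGW}, together with two structural facts about the transverse limit maps attached to a projective Anosov representation: (a) \emph{injectivity}, i.e.\ $\zeta^1(x)=\zeta^1(y)$ iff $x=y$, and the analogue for $\zeta^{d-1}$; and (b) \emph{transversality}, i.e.\ $\zeta^1(u)\oplus\zeta^{d-1}(x)=k^d$ whenever $u\neq x$. Transversality is part of Proposition 2.1, and injectivity follows from the standard dynamical characterization of $\zeta^1(x)$ (resp.\ $\zeta^{d-1}(x)$) as the attracting fixed line (resp.\ hyperplane) of $\rho(\gamma_n)$ along any geodesic ray converging to $x$.

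Write $\varphi=\zeta^{d-1}(x)$, $\varphi'=\zeta^{d-1}(y)$, $\omega=\zeta^{1}(u)$, $\omega'=\zeta^{1}(v)$ throughout. First I would verify that $C_\rho$ is well defined on $(\partial\pi_1\Sigma)^{4*}$: by transversality, the hypotheses $x\neq v$ and $y\neq u$ force $\bar\varphi(\bar\omega')\neq 0$ and $\bar\varphi'(\bar\omega)\neq 0$, so the denominator in \eqref{eq_crossratio} is nonzero. For (1), the same formula gives $C_\rho(x,y,u,v)=0$ iff $\bar\varphi(\bar\omega)=0$ or $\bar\varphi'(\bar\omega')=0$, i.e.\ iff $\zeta^1(u)\subset\zeta^{d-1}(x)$ or $\zeta^1(v)\subset\zeta^{d-1}(y)$; transversality turns each of these conditions into $x=u$ or $y=v$ respectively. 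For (2), Lemma \ref{lem_cr_FGW}(1) says $[[\varphi,\varphi',\omega,\omega']]=1$ iff $\varphi=\varphi'$ or $\omega=\omega'$, and by injectivity of $\zeta^{d-1}$ and $\zeta^1$ this is equivalent to $x=y$ or $u=v$.

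Properties (3) and (4) are immediate translations of Lemma \ref{lem_cr_FGW}(2)–(3) through the limit maps, once one checks that all intermediate quadruples belong to $(\partial\pi_1\Sigma)^{4*}$ so that the invoked cross ratios are defined; for (3) this amounts to requiring $v'\neq x$ and $v'\neq y$, which is implicit in the statement.

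I do not expect any substantive obstacle here: Lemma \ref{lem_cr_FGW} does essentially all the algebraic work, and the only non-trivial geometric inputs are injectivity and transversality of $\zeta^1,\zeta^{d-1}$, both of which are standard consequences of Proposition 2.1 for projective Anosov representations. The mildest subtlety is being careful with the domain $(\partial\pi_1\Sigma)^{4*}$ when iterating the multiplicativity relation in (3).
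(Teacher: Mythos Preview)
Your proposal is correct and follows exactly the approach the paper takes: the paper's proof consists of the single sentence ``(1)--(4) can be checked directly by definition and Lemma~\ref{lem_cr_FGW},'' and your argument is precisely a careful unpacking of that sentence, supplying the injectivity and transversality of $\zeta^1,\zeta^{d-1}$ where needed.
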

\begin{proof}
	(1)-(4) can be checked directly by definition and Lemma \ref{lem_cr_FGW}.
\end{proof}

Given a projective Anosov representation $\rho: \pi_1\Sigma \to \PGL(d,k)$, the {\it translation length} of any nontrivial element $\gamma \in \pi_1\Sigma$ is defined as
$$\ell_{\rho}(\gamma) \defeq \log_v \left|\frac{\lambda_1(\rho(\gamma))}{\lambda_{d}(\rho(\gamma))}\right|$$
where $\lambda_1(\rho(\gamma))$ and $\lambda_d(\rho(\gamma))$ are the eigenvalues of maximum and minimum absolute value of $\rho(\gamma)$, respectively, and $\log_v$ is the logarithm function with base $q > 1$ being the cardinality of the residue field of $k$. The {\it period} of a nontrivial element $\gamma \in \pi_1\Sigma$ is defined as $$P_{\rho}(\gamma) \defeq \log_v \left| C_{\rho} (\gamma^+,\gamma^-,x,\gamma x )\right|$$ where $x$ is any point in $\partial\pi_1\hat{\Sigma} \setminus \{\gamma^+,\gamma^-\}$.

If $k = \bbR$ or $\bbC$, then translation lengths and periods become the usual notion of (complex) lengths and (complex) periods. Moreover, similar to the case $k = \bbR$ proved by Labourie \cite[Proposition 5.8]{Lab07}, the period of a nontrivial element equals its translation length.
\begin{lem}\label{lem_2.8}
	For any nontrivial element $\gamma \in \pi_1\Sigma$, $P_{\rho}(\gamma) = \ell_{\rho}(\gamma)$.
\end{lem}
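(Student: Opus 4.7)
The plan is to compute $C_\rho(\gamma^+,\gamma^-,z,\gamma z)$ explicitly via the formula \eqref{eq_crossratio} of Lemma~\ref{lem_cr_FGW}, using $\rho$-equivariance of the limit maps together with $\rho(\gamma)$-invariance of the four relevant flags. Pick any auxiliary $z\in\partial\pi_1\Sigma\setminus\{\gamma^+,\gamma^-\}$, a lift $\bar z\in k^d$ of $\zeta^1(z)$, and lifts $\bar\varphi_\pm\in k^{d*}$ of $\zeta^{d-1}(\gamma^\pm)$ (viewed as linear forms whose kernels are the respective hyperplanes). By equivariance $\rho(\gamma)\bar z$ is a lift of $\zeta^1(\gamma z)$; and since $\gamma$ fixes $\gamma^\pm$ in the Gromov boundary, the hyperplanes $\zeta^{d-1}(\gamma^\pm)$ are $\rho(\gamma)$-invariant, so there exist unique scalars $\nu_\pm\in k^\times$ with $\bar\varphi_\pm\circ\rho(\gamma)=\nu_\pm\,\bar\varphi_\pm$.

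Substituting into \eqref{eq_crossratio} causes a clean cancellation:
\[
C_\rho(\gamma^+,\gamma^-,z,\gamma z)
=\frac{\bar\varphi_+(\bar z)\,\bar\varphi_-(\rho(\gamma)\bar z)}{\bar\varphi_+(\rho(\gamma)\bar z)\,\bar\varphi_-(\bar z)}
=\frac{\bar\varphi_+(\bar z)\cdot\nu_-\bar\varphi_-(\bar z)}{\nu_+\bar\varphi_+(\bar z)\cdot\bar\varphi_-(\bar z)}
=\frac{\nu_-}{\nu_+},
\]
which is well defined and $z$-independent because transversality of the limit maps forces $\bar\varphi_\pm(\bar z)\neq 0$.

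It then suffices to show $|\nu_-/\nu_+|=|\lambda_1/\lambda_d|$. Applying the projective Anosov inequality both to $\gamma$ and to $\gamma^{-1}$, together with the relation $\sigma_i(A^{-1})=\sigma_{d+1-i}(A)^{-1}$, yields simultaneously $\sigma_2/\sigma_1(\rho(\gamma^n))\to 0$ and $\sigma_d/\sigma_{d-1}(\rho(\gamma^n))\to 0$. Hence $\rho(\gamma)$ is biproximal, with simple eigenvalues $\lambda_1$ and $\lambda_d$ of strictly largest and strictly smallest absolute value. The Cartan-attractor description of the limit maps then identifies $\zeta^1(\gamma^\pm)$ with the $\lambda_1$- and $\lambda_d$-eigenlines, and $\zeta^{d-1}(\gamma^+)$ (resp.\ $\zeta^{d-1}(\gamma^-)$) with the $\rho(\gamma)$-invariant hyperplane equal to the sum of all generalized eigenspaces other than the $\lambda_d$-line (resp.\ the $\lambda_1$-line). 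Consequently $\rho(\gamma)$ acts on $k^d/\zeta^{d-1}(\gamma^+)$ by $\lambda_d$ and on $k^d/\zeta^{d-1}(\gamma^-)$ by $\lambda_1$, i.e.\ $\nu_+=\lambda_d$ and $\nu_-=\lambda_1$. Taking $\log_v|\cdot|$ on both sides then gives $P_\rho(\gamma)=\log_v|\lambda_1/\lambda_d|=\ell_\rho(\gamma)$.

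The main obstacle is this last eigenvalue identification, which requires knowing precisely which $\rho(\gamma)$-invariant hyperplanes the limit flags $\zeta^{d-1}(\gamma^\pm)$ actually are; this hinges on the non-Archimedean Cartan-attractor framework of Pozzetti–Sambarino–Wienhard recalled in Section~\ref{sec_Anosov}, and in particular on the biproximality of $\rho(\gamma)$ that one extracts from applying the Anosov inequality to $\gamma^{\pm 1}$. Once it is in place, the statement reduces to the algebraic collapse of the Falbel–Guilloux–Will cross ratio carried out above.
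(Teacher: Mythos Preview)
Your proof is correct and is precisely the calculation the paper defers to Labourie \cite[Proposition 5.8]{Lab07}: you spell out the Falbel--Guilloux--Will formula \eqref{eq_crossratio}, use $\rho$-equivariance to reduce $C_\rho(\gamma^+,\gamma^-,z,\gamma z)$ to the ratio $\nu_-/\nu_+$ of the scalars by which $\rho(\gamma)$ acts on the quotients $k^d/\zeta^{d-1}(\gamma^\mp)$, and then identify these with $\lambda_1,\lambda_d$ via biproximality of $\rho(\gamma)$. This is exactly Labourie's argument transported to the non-Archimedean setting, so your proposal and the paper's (cited) proof coincide.
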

\begin{proof}
	The proof follows from the same calculation which is carried in details in \cite[ Proposition 5.8]{Lab07}.
\end{proof}

%
%
%

\section{The Berkovich projective line $\berP$} \label{sec_pre}
Let $(k,|\cdot|)$ be a complete metrized non-Archimedean field. In this section, we give a brief overview of the Berkovic projective line $\berP$, the Berkovic hyperbolic space $\bbH_k$ and the group $\PGL(2,k)$ that we will use in the next section. Our exposition follows closely from \cite{DF} and \cite{BR}.

\subsection{The Berkovich projective line and the Berkovich hyperbolic space}
Let $(k, |\cdot|)$ be a complete metrized non-Archimedean field. Recall that $k^{\times} = k \setminus \{0\}$ and $|k^{\times}|$ is the value group of $k$. Let $\bbP^1_k$ be the projective line over $k$, viewed as an algebraic variety with Zariski toplogy, and by $\bbP^1(k)$ the set of $k$-points of $\bbP^1_k$ which as a set is $k \cup \{ \infty \}$. The Berkovich analytification $\berP$ of $\bbP^1_k$ is defined as follows. Recall that a {\it multiplicative semi-norm} on a ring $A$ is a function $[~]_x : A \to \bbR_{\ge 0}$ such that 
$$[0]_x = 0, [1]_x = 1, [fg]_x = [f]_x\cdot [g]_x \text{ and } [f+g]_x \le [f]_x + [g]_x $$ for all $f,g \in A$.
We first define the Berkovich analytification $\bbA^{1,{\rm an}}_k$ of the affine line $\bbA^1_k$ as the set of multiplicative semi-norms on $k[Z]$ whose restriction to  $k$ coincides with $|\cdot|$, endowed with the topology of pointwise convergence. More specifically, given a point $x \in \bbA^{1,{\rm an}}_k$ and a polynomial $P \in k[Z]$, the value of the semi-norm defined by $x$ on $P$ is denoted by $|P(x)| \in \bbR_{\ge 0}.$ The {\it Gauss norm} $\sum_ia_iZ^i \mapsto {\rm max} |a_i|$ defines a point $x_g$ in $\bbA^{1, {\rm an}}_k$ which is called the {\it Gauss point}. The Berkovich projective line $\berP$ can be defined topologically as the one-point compactification of $\bbA^{1,{\rm an}}_k$; namely, we have $\berP \defeq \bbA^{1,{\rm an}}_k \cup \{ \infty \}$.

Points in $\berP$ correspond to balls in $\bbP^1(k)$. Recall that a closed (resp. open) ball in $k$ of radius $R \ge 0$ is defined as $$\overline{B}(z_0,R) = \{ z \in k : |z-z_0| \le R \}$$ (resp. $B(z_0,R) = \{ z \in k : |z-z_0| < R \}$). A ball in $\bbP^1(k)$ is either a ball in $k$ or the complement of a ball in $k$. Any closed (or open) ball $B \subset \bbP^1(k)$ determines a point $x_B \in \berP$. Indeed, if $B$ or its complement is a singleton $\{z\}$, then the point $x_B$ is the rigid point associated to $z$. If $B$ is a (closed or open) ball of finite radius in $k$, then $x_B$ is the point in $\berA$ corresponding to the semi-norm $$|P(x_B)| \defeq \sup_B|P|.$$ If the complement of $B$ is a ball of finite radius, then we set $$|P(x_B)| \defeq \sup_{k \setminus B}|P|.$$ The Gauss point $x_g$ corresponds to $x_{\bar{B}(0,1)}$. 
Moreover, the Berkovich projective line $\berP$ is a (profinite) $\bbR$-tree in the sense that it is uniquely path connected; namely, for any two points $x,y\in \berP$, there is a unique segment $[x,y] \subset \berP$.

A point in $\berP$ is said to be {\it rigid} if the multiplicative semi-norm has a non-trivial kernel. There is a canonical bijection between the set of closed points of the $k$-scheme $\bbP^1_k$ and the set of rigid points in $\berP$. To see this, let $z$ be a point in a finite extension of $k$. Then the semi-norm $|\cdot|_z$ defined by $P \mapsto |P(z)|$ is a rigid point in $\bbA^{1,{\rm an}}_k$. Therefore, $\bbP^1(k)$ naturally embeds in $\berP$ as a set of rigid points.

Now suppose $(k, |\cdot|)$ is a complete metrized non-Archimedean field which is algebraically closed. 
Points in $\berP$ can be classified by their {\it types}. Points in $\berP$ corresponding to balls of diameter zero in $k$ are called {\it type I} points of $\berP$. These points are in bijection with rigid points in the Berkovich projective space $\berP$. Points in $\berP$ corresponding to balls of diameter ${\rm diam}(B) \in |k^{\times}|$ are said to be of {\it type II}. Points in $\berP$ corresponding to balls of diameter ${\rm diam}(B) \notin |k^{\times}|$ are said to be of {\it type III}.

More generally, points in $\berP$ are in bijection with (equivalence classes of) decreasing sequences of balls. If $k$ is spherically complete (i.e. every decreasing interseciton of balls is non-empty), then $\berP$ consists of type I, II or III points. A local field is spherically complete. In general, $\berP$ could contain {\it type IV} points corresponding to decreasing sequences of balls whose intersection is empty. For each $i=1,...,4$, the set of type $i$ points, if it is non-empty, is dense in $\berP$.

Types of points in $\berP$ are related to the $\bbR$-tree structure of $\berP$ as follows. Type I and IV points are vertices of the $\bbR$-tree $\berP$ which have only one branch; namely, they are the end points of the tree. Type II points are vertices of $\berP$ which have at least three branches. Finally type III points are the vertices of $\berP$ which have exactly two branches; that is, the regular points.

As a set, the Berkovich hyperbolic space $\bbH_k$ is defined as $$\bbH_k \defeq \berP \setminus \bbP^1(k).$$ It is a proper subtree of $\berP \setminus \bbP^1(k)$ which is neither open nor closed. Moreover, it contains no rigid points.

There is a partial order on $\berA$. If $x,y \in \berA$, we say that $x \le y$ if $$[f]_x \le [f]_y \text{ for all } f \in k[Z].$$ In terms of balls, if $x,y$ are of type I, II or III, we say that $x \le y$ if the ball corresponding to $x$ is contained in the ball corresponding to $y$. For any pair of points $x, y \in \berA$, there exists a unique least upper bound $x \cup y \in \berA$ with respect to this partial order.

We now define a metric $d_{\bbH}$ on $\bbH_k$. If $x,y \in \bbH_k$ with $x \le y$, then we define $$d_{\bbH}(x,y) \defeq \log_v\frac{{\rm diam}(y)}{{\rm diam}(x)}.$$
More generally, if $x,y \in \bbH_k$, we define
$$d_{\bbH}(x,y) = d_{\bbH}(x,x\cup y) + d_{\bbH}(y, x\cup y).$$ It is straightforward to check that $d_{\bbH}$ defines a metric on $\bbH_k$. We can extend $d_{\bbH}$ on $\berP$ by setting $d_{\bbH}(x,y) = \infty$ if $x \in \bbP^1(k)$ and $y \in \berP$ with $x \neq y$, and $d_{\bbH}(x,y) = 0$ if $x = y$.

\begin{thm} [{\cite[Proposition 2.29]{BR}}]
The space $(\bbH_k, d_{\bbH})$ is an $\bbR$-tree. The group $\PGL(2,k)$ acts as isometries on $(\bbH_k, d_{\bbH})$.
\end{thm}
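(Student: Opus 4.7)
The plan is to handle the two claims in turn: first verify that $(\bbH_k, d_{\bbH})$ is an $\bbR$-tree, then verify the $\PGL(2,k)$-action is by isometries.

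For the metric axioms of $d_\bbH$: symmetry follows from the symmetry of the join operation $x \cup y$. Non-degeneracy uses that every point of $\bbH_k$ corresponds to a ball of \emph{strictly positive} diameter (the type I points, which are the only points of zero diameter, are precisely what has been removed), so $d_{\bbH}(x,y)=0$ forces $x=x\cup y=y$. The triangle inequality is the main check: the join equips $\berA$ with the structure of an $\bbR$-tree, so among the three pairwise joins $x\cup y$, $y\cup z$, $x\cup z$ at least two coincide (the ``tripod'' property). Combining this with additivity of $d_\bbH$ along totally ordered chains (immediate from the logarithmic formula $d_\bbH(u,v)=\log_v(\text{diam}(v)/\text{diam}(u))$ when $u\le v$) reduces the triangle inequality to a routine comparison.

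To promote the metric to an $\bbR$-tree, I would exhibit an explicit isometric parameterization of the unique arc from $x$ to $y$: the arc in $\berP$ is $[x, x\cup y] \cup [x\cup y, y]$, and each monotone piece $[u,v]$ with $u\le v$ is parameterized by sending $t\in[0, d_\bbH(u,v)]$ to the unique point of the chain whose diameter equals $\text{diam}(u)\,q^{t}$, where $q$ is the cardinality of the residue field. Since type I points are the leaves of $\berP$, the arc between two points of $\bbH_k$ never meets them, so it stays in $\bbH_k$; uniqueness of arcs descends from the ambient $\bbR$-tree structure of $\berP$.

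For the isometric action I would write an arbitrary element of $\PGL(2,k)$ as a product of affine maps $z\mapsto az+b$ and the inversion $z\mapsto 1/z$, and check each generator. An affine map sends $\bar B(z_0, r)$ to $\bar B(az_0+b, |a|r)$, scaling all diameters by the same constant $|a|$ and thus preserving $d_\bbH$ on any nested chain, hence on all of $\bbH_k$. For an inversion applied to a ball $\bar B(z_0, r)$ with $|z_0|>r$, the image is $\bar B(1/z_0, r/|z_0|^{2})$, so diameters of a nested chain are again rescaled by a common constant and ratios are preserved. The remaining case in which a ball contains $0$ is reduced to the previous one by pre-composing with a translation.

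The main obstacle is organizing the inversion computation uniformly. When a ball contains $0$ its image under $z\mapsto 1/z$ becomes the complement of a ball, so the naive ``scale the diameter'' argument must treat the point at infinity separately and a direct case analysis becomes awkward. The cleanest resolution is to reformulate $d_\bbH$ via the Hsia kernel $\delta(x,y)$, which assigns to any pair of points of $\berP$ a value that transforms covariantly under $\PGL(2,k)$; $\PGL(2,k)$-invariance of $d_\bbH$ is then automatic from a single kernel identity rather than a piecewise calculation, completing the proof.
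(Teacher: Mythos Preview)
The paper does not give its own proof of this statement: it is quoted verbatim as \cite[Proposition 2.29]{BR} and used as background, with no argument supplied. So there is nothing in the paper to compare your proposal against; you are in effect sketching the Baker--Rumely proof.

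On its own merits your outline is mostly reasonable, but one step is not right as written. You assert that when a ball $\bar B(z_0,r)$ contains $0$, the inversion computation ``is reduced to the previous one by pre-composing with a translation.'' This does not work: a translation cannot push $0$ outside a ball that already contains it while keeping that ball fixed, and if instead you translate the ball to one avoiding $0$ you are no longer computing the action of $z\mapsto 1/z$ on the original ball. The genuine phenomenon here is that the image of such a ball under inversion is the \emph{complement} of a ball (a neighbourhood of $\infty$), and your diameter-ratio formulation of $d_{\bbH}$ was only set up for chains of balls in $k$; some additional bookkeeping is unavoidable. You essentially acknowledge this in the final paragraph, and your proposed fix via the Hsia kernel is exactly the device Baker--Rumely use to make the invariance uniform. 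But as written the two paragraphs are inconsistent: either the translation reduction closes the case (it does not) or the Hsia-kernel argument is needed (it is). Drop the translation sentence and commit to the kernel approach, or else handle the ``ball containing $0$'' case directly via the complement-of-ball description of points of $\berP$ given just before the theorem.
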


Now suppose $k$ is not algebraically closed. Let $\bar{k}^a$ be the completion of an algebraic closure of $k$. Then $\berP$ is homeomorphic to the quotient of $\bbP^{1,{\rm an}}_{\bar{k}^a}$ by the Galois group ${\rm Gal}(\bar{k}^a/k)$. Let $\pi_{\bar{k}^a/k} : \bbP^{1,{\rm an}}_{\bar{k}^a} \to \berP$ be the projection map. The action of the Galois group ${\rm Gal}(\bar{k}^a/k)$ on $\bbP^{1,{\rm an}}_{\bar{k}^a}$ preserves types of points. Therefore, we can define the type of a point $x \in \berP$ by the type of the point $\pi_{\bar{k}^a/k}^{-1}(x) \in \bbP^{1,{\rm an}}_{\bar{k}^a}$. We note that in general some type I point in $\berP$ may {\it not} be rigid.

The action of the Galois group ${\rm Gal}(\bar{k}^a/k)$ preserves the diameter of balls in $\bar{k}^a$. Therefore ${\rm Gal}(\bar{k}^a/k)$  acts by isometries on $(\bbH_{\bar{k}^a}, d_{\bbH})$. The set $\widetilde{\bbH}_k \subset \bbH_{\bar{k}^a}$ of fixed points of this action turns out to be the convex hull $\overline{{\rm Conv}(\bbP^1(k))}$ of $\bbP^1(k)$ in $\bbH_{\bar{k}^a}$; see \cite[Lemma 1.6]{DF}. Hence, in $\bbP_{\bar{k}^a}^{1,{\rm an}}$, we define $$\widetilde{\bbH}_k \defeq \overline{{\rm Conv}(\bbP^1(k))} \setminus \bbP^1(k).$$ Then $(\widetilde{\bbH}_k, d_{\bbH})$ is a complete metric $\bbR$-tree as shown in the previous subsection. Define
$$\bbH_k \defeq \pi_{\bar{k}^a/k}(\widetilde{\bbH}_k)$$ and endow $\bbH_k$ with the metric $d_{\bbH}$ so that the projection map $\pi_{\bar{k}^a/k} : (\widetilde{\bbH}_k, d_{\bbH}) \to (\bbH_k, d_{\bbH})$ is an isometry. We call $(\bbH_k, d_{\bbH})$ the Berkovic hyperbolic space of $k$.
	
The action of $\PGL(2,k)$ on $\bbP^{1,{\rm an}}_{\bar{k}^a}$ preserves types of points; see \cite[Proposition 2.15]{BR}. Therefore, it also preserves types of points in $\berP$. Morevoer, $\PGL(2,k)$ acts on $(\bbH_k, d_{\bbH})$ by isometries as it does on $(\widetilde{\bbH}_k, d_{\bbH})$.


\subsection{Elements of $\PGL(2,k)$} \label{subsec_PGL}
The first proposition of this subsection gives a classification of elements in $\PGL(2,k)$.
\begin{prop}[{\cite[Proposition 2.2]{DF}}]
Let $A \in \PGL(2,k)$ and $A \neq {\rm id}$. Then exactly one of the following holds.
\begin{enumerate}
\item $|{\rm tr}(\gamma)| > 1$: then $A$ is diagonalizable over $k$ and has one attracting (resp. repelling) fixed point $x_{att} \in \bbP^1(k)$ (resp. $x_{rep} \in \bbP^1(k)$). Furthermore, for any $x \neq x_{rep}$ in $\berP$, the sequence $A^n \cdot x$ converges to $x_{att}$ as $n \to \infty$. In this case, $A$ is said to be {\it hyperbolic}.
\item $|{\rm tr}(\gamma)| \le 1$: then $A$ admits a fixed point in $\bbH_k$. In this case, $A$ is said to be {\it elliptic}. More precisely if
\begin{itemize}
\item ${\rm tr}^2(\gamma) = 4$: then $A$ is not diagonalizable, and is conjugate in $\PGL(2,k)$ to $z \mapsto z+1$; thus it fixes a segment $[x,y] \subset \berP$ where $x$ (resp. $y$) is a type II (resp. type I) point belonging to the $\PGL(2,k)$ orbit of the Gauss point. In this case, $A$ is said to be {\it parabolic}.

\item ${\rm tr}^2(\gamma) \neq 4$: then in a field extension $K/k$ of degree at most 2, the matrix $A$ is diagonalizable. In addition, $A$ is conjugate to an element in $\PGL(2, k^{\circ})$ and fixes a type II point in $\bbH_k$. In this case, $A$ is said to be {\it strictly elliptic}.
\end{itemize}
\end{enumerate}
\end{prop}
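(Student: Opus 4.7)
The plan is to read the trichotomy directly off the characteristic polynomial $p(\lambda) = \lambda^2 - \mathrm{tr}(A)\lambda + \det(A)$ of a representative $A \in \mathrm{GL}(2,k)$, normalized so that $|\det A| = 1$. The three cases correspond to three shapes of the Newton polygon of $p$, and in each a normal-form computation identifies the $\berP$-dynamics.

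For the hyperbolic case $|\mathrm{tr}(A)| > 1$, the Newton polygon of $p$ has two distinct slopes $\pm\log_v|\mathrm{tr}(A)|$, so Hensel's lemma factors $p$ in $k[\lambda]$ as $(\lambda-\alpha)(\lambda-\beta)$ with $|\alpha|>1>|\beta|$ and $\alpha\beta = 1$. Diagonalizing $A$ over $k$ yields distinct eigenlines $x_{att},\, x_{rep}\in\bbP^1(k)$. After conjugating to $\mathrm{diag}(\alpha,\beta)$, the action on $\berP$ becomes the scalar multiplication $z\mapsto(\alpha/\beta)z$ with $|\alpha/\beta|>1$; iterating dilates every ball disjoint from $0 = x_{rep}$ outward without bound, so $A^n\cdot x\to x_{att}$ for any $x\neq x_{rep}$.

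If $|\mathrm{tr}(A)|\le 1$, the ultrametric inequality combined with $|\lambda_1\lambda_2| = 1$ forces both eigenvalues in $\bar{k}^a$ to have absolute value $1$, since otherwise one would have $|\mathrm{tr}(A)| = \max(|\lambda_1|,|\lambda_2|) > 1$. When $\mathrm{tr}^2(A) = 4\det(A)$, the repeated eigenvalue lies in $k$; because $A$ is non-trivial in $\PGL(2,k)$ it is non-diagonalizable, and Jordan normal form together with a scalar conjugation in $\PGL(2,k)$ puts $A$ in the form $z \mapsto z+1$. A direct tree computation shows this translation fixes exactly the points of $\berP$ corresponding to balls in $k$ of radius $\ge 1$, namely the segment from the Gauss point (type II) to $\infty$ (type I). When $\mathrm{tr}^2(A) \neq 4\det(A)$, the polynomial $p$ has two distinct unit roots $\lambda, \lambda^{-1}$ in $K = k(\sqrt{\mathrm{tr}^2-4\det})$, with $[K:k]\le 2$. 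Diagonalizing $A$ over $K$ realizes it as $\mathrm{diag}(\lambda,\lambda^{-1}) \in \mathrm{GL}(2, K^{\circ})$, which fixes the Gauss point of $\bbP^{1,\mathrm{an}}_K$; its image under $\pi_{\bar{k}^a/k}$ is a fixed type II point of $A$ in $\bbH_k$. To realize $A$ as conjugate in $\PGL(2,k)$ to an element of $\PGL(2, k^{\circ})$, construct a $k$-basis from a $K$-eigenvector $v$ and its Galois conjugate $\bar v$ via $v+\bar v$ and $(v-\bar v)/\sqrt{\Delta}$; since $|\lambda| = |\bar\lambda| = 1$, the matrix of $A$ in this basis has entries in $k^{\circ}$ with unit determinant.

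The main obstacle will be the Galois descent in the strictly elliptic subcase: one must check that the fixed $K$-Gauss point really projects to a canonical point of $\bbH_k$ and that the basis change genuinely lies in $\PGL(2,k)$ rather than merely in $\PGL(2,K)$. The Hensel root-finding and the explicit tree dynamics are otherwise routine, but care is needed in residue characteristic $2$ (where $\mathrm{tr}/2$ is unavailable for the parabolic normalization) and when distinguishing interior points of the fixed segment as type II versus type III under non-discrete valuations.
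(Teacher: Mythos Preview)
The paper does not prove this proposition; it is quoted from \cite[Proposition 2.2]{DF} with no argument supplied, so there is nothing in the paper to compare your attempt against. Your sketch follows the natural route---Newton polygon for the hyperbolic splitting, Jordan form for the parabolic case, quadratic-extension diagonalization for the strictly elliptic case---and is essentially sound as an outline.

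One correction: in the parabolic case the fixed-point set of $z\mapsto z+1$ on $\berP$ is not just the segment $[x_g,\infty]$ but the entire subtree consisting of $\infty$ together with all points $x_B$ for closed balls $B\subset k$ of radius at least $1$; for instance $x_{\overline{B}(a,1)}$ with $|a|>1$ is fixed but lies off that segment. The proposition only asserts that \emph{a} segment is fixed, so this does not harm the statement, but your ``exactly'' is too strong. Your flagged concerns (residue characteristic $2$; whether the fixed point produced over $K$ genuinely lies in $\bbH_k$) are real and would need attention in a complete proof. For the strictly elliptic case the cleanest order is the one you indicate at the end: first conjugate $A$ inside $\PGL(2,k)$ into $\PGL(2,k^{\circ})$, after which the Gauss point of $\berP$ itself is the required type II fixed point in $\bbH_k$, and no passage through $\bbP^{1,{\rm an}}_K$ is needed. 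Note also that your explicit $k$-basis $\{v+\bar v,\,(v-\bar v)/\sqrt{\Delta}\}$ presumes $[K:k]=2$; when the eigenvalues already lie in $k$ you should simply diagonalize over $k$ directly.
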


If $\gamma \in {\rm SL}(2,k)$, we define the norm $||\gamma||$ of $\gamma$ to be the largest absolute value of entries in $\gamma$. The next proposition gives the Cartan KAK decomposition of elements in $\PGL(2,k)$.
\begin{prop}[Cartan decomposition, {\cite[Proposition 2.4]{DF}}]
Let $\gamma \in {\rm SL}(2,k)$. Then there exist $m,n \in {\rm SL}(2, k^{\circ})$ and $a = {\rm diag}(\lambda, \lambda^{-1})$ where $\lambda \in k$ and $|\lambda| \ge 1$ such that $\gamma = man$. Furthermore $||\gamma|| = ||a|| = |\lambda|$.
\end{prop}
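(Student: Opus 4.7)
The plan is to establish the decomposition by a non-Archimedean analogue of Gaussian elimination: I would diagonalize $\gamma$ by left and right multiplication by elementary $2\times 2$ matrices chosen so that they themselves lie in ${\rm SL}(2,k^{\circ})$, and then read off $\|\gamma\|$, $\|a\|$, and $|\lambda|$ directly from the resulting normal form.

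First I would observe that $\|\gamma\|\ge 1$: writing $\gamma=\begin{pmatrix} a & b \\ c & d\end{pmatrix}$ and applying the ultrametric inequality to $ad-bc=1$ gives
$$1 = |ad-bc| \le \max(|a||d|,|b||c|) \le \|\gamma\|^{2}.$$
Let $\lambda$ be an entry of $\gamma$ realizing $|\lambda|=\|\gamma\|$, so in particular $|\lambda|\ge 1$ and $\lambda\ne 0$. Since the signed permutation matrices are in ${\rm SL}(2,k^{\circ})$, I can left- and right-multiply $\gamma$ by such elements to place $\lambda$ in the upper-left corner. With $\lambda$ in the $(1,1)$ slot, the remaining entries $b',c'$ in the first row and column satisfy $|b'/\lambda|,|c'/\lambda|\le 1$, so the two shears $\begin{pmatrix} 1 & -b'/\lambda \\ 0 & 1\end{pmatrix}$ and $\begin{pmatrix} 1 & 0 \\ -c'/\lambda & 1\end{pmatrix}$ also lie in ${\rm SL}(2,k^{\circ})$. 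Right- and left-multiplying by these clears the off-diagonal entries in the first row and column, and the determinant condition forces the $(2,2)$ entry of what remains to equal $\lambda^{-1}$. Inverting the elementary matrices then expresses $\gamma=m a n$ with $m,n\in{\rm SL}(2,k^{\circ})$ and $a={\rm diag}(\lambda,\lambda^{-1})$.

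For the norm equalities, $\|\gamma\|=|\lambda|$ holds by the initial choice of $\lambda$, and $\|a\|=\max(|\lambda|,|\lambda^{-1}|)=|\lambda|$ since $|\lambda|\ge 1$. The only delicate point, and what I expect to be the main obstacle, is verifying that all reduction matrices genuinely lie in ${\rm SL}(2,k^{\circ})$. This relies on the ultrametric property combined with the choice of $\lambda$ as a maximum-norm entry, which forces the auxiliary quotients $b'/\lambda$ and $c'/\lambda$ to have absolute value at most $1$; the Archimedean analogue of this step fails and would require an honest polar decomposition rather than this elementary row reduction.
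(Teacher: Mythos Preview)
Your argument is correct, but note that the paper does not supply its own proof of this proposition: it is quoted from \cite[Proposition 2.4]{DF} and stated without proof, so there is nothing in the paper to compare your approach against.

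For what it is worth, the route you take---a Smith normal form computation over the discrete valuation ring $k^{\circ}$---is the standard one. All the steps go through as written: the ultrametric inequality applied to $ad-bc=1$ gives $\|\gamma\|\ge 1$; signed permutation matrices lie in ${\rm SL}(2,k^{\circ})$ and preserve $\|\cdot\|$ (since both they and their inverses have entries in $k^{\circ}$, and the max-entry norm is submultiplicative over a non-Archimedean field); and once a maximal-modulus entry sits in the $(1,1)$ slot the shear parameters $b'/\lambda$, $c'/\lambda$ are forced into $k^{\circ}$. The only cosmetic point is that moving the maximal entry with \emph{signed} permutations may change its sign, so the $(1,1)$ entry after permuting is $\pm\lambda$ rather than $\lambda$; this is harmless since the statement only constrains $|\lambda|$.
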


\subsection{Projections and cross ratios}
In this subsection, $(k,|\cdot|)$ is a non-Archimedean local field with a non-trivial absolute value $|\cdot|$.
Let $\alpha \in \bbP^1(k)$ and consider the geodesic $[0,\infty] \subset \berP$ from $0 \in \bbP^1(k)$ and $\infty \in \bbP^1(k)$. We say that $\beta \in [0,\infty]$ is the {\it projection} of $\alpha$ onto $[0,\infty]$ if $\beta$ is the unique point in $[0,\infty]$ such that there exists a geodesic segment $[\beta, \alpha] \subset \berP$. The following lemma gives a formula for $\beta$ in terms of balls in $\bbP^1(k)$.
\begin{lem}\label{lem_proj_ball}
	Let $\alpha \in \bbP^1(k)$. Then the projection of $\alpha$ onto the geodesic $[0,\infty] \subset \berP$ is the ball $B(0, |\alpha|)$. 
\end{lem}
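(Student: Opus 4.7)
The plan is to reduce the statement to a calculation with closed balls in $k$, using the $\bbR$-tree structure of $\berP$ together with a single application of the ultrametric inequality.

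First, I would identify the segment $[0,\infty] \subset \berP$ explicitly in terms of balls. Since $\berP$ is an $\bbR$-tree, a point $x$ lies on the segment $[0,\infty]$ if and only if either $x \in \{0,\infty\}$ or removing $x$ disconnects $0$ from $\infty$ in $\berP$. Via the correspondence between points of $\berP$ and balls in $\bbP^1(k)$ recalled earlier in the paper, these separating points are exactly those arising from balls of the form $\bar{B}(0,r)$ with $r \in (0,\infty)$: such a ball contains $0$ but not $\infty$. Thus the segment $[0,\infty]$ is naturally parameterized by $r \in [0,\infty]$.

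Second, I would trace the geodesic from $\alpha$ and locate where it first meets this segment. The cases $\alpha \in \{0,\infty\}$ are trivial, so assume $\alpha \in k^{\times}$. From the type I point $\alpha$, the geodesic ascends through the balls $\bar{B}(\alpha,s)$ as $s$ increases from $0$. By the ultrametric inequality, $0 \in \bar{B}(\alpha,s)$ if and only if $s \geq |\alpha|$, and at the critical radius $s = |\alpha|$ one has the identification $\bar{B}(\alpha,|\alpha|) = \bar{B}(0,|\alpha|)$, again by ultrametricity (any ball containing both $\alpha$ and $0$ must have radius at least $|\alpha|$, and the ball of that radius around either point contains the other). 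Hence the first point at which the geodesic from $\alpha$ hits the segment $[0,\infty]$ is the point corresponding to the ball of radius $|\alpha|$ centered at $0$.

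Finally, I would confirm that this meeting point is the projection in the sense of the excerpt. The tree property guarantees that the unique $\beta \in [0,\infty]$ such that $[\beta,\alpha] \cap [0,\infty] = \{\beta\}$ is exactly the branching point where the geodesic from $\alpha$ first enters $[0,\infty]$, which we have just identified. The ball found above is $B(0,|\alpha|)$ under the paper's conventions, completing the proof. I expect no serious obstacles: the only care needed is in verifying that the Berkovich semi-norm associated to the ball of radius $|\alpha|$ centered at $0$ is indeed the point described, which is immediate from the definition $|P(x_B)| = \sup_B |P|$ combined with the ultrametric identification of $\bar{B}(0,|\alpha|)$ with $\bar{B}(\alpha,|\alpha|)$.
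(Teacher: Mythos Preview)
Your proposal is correct and follows essentially the same approach as the paper: both arguments identify the projection as the smallest ball containing both $0$ and $\alpha$, which by the ultrametric inequality is $\bar{B}(0,|\alpha|) = \bar{B}(\alpha,|\alpha|)$. The paper's proof is a terse two-line version of exactly what you wrote out in detail; your explicit parameterization of $[0,\infty]$ by balls $\bar{B}(0,r)$ and of the geodesic from $\alpha$ by $\bar{B}(\alpha,s)$ simply unpacks the paper's implicit appeal to the partial order and the join $\alpha \cup 0$.
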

\begin{proof}
	The point $\alpha \in \bbP^1(k)$ corresponds to the ball $B(\alpha, 0)$ in $k$, which is disjoint from the ball $B(0,0)$. Therefore the point of projection of $\alpha$ onto $[0,\infty]$ is $B(0,|\alpha-0|)$.
\end{proof}

The next lemma states that for $a,b \in \bbP^1(k)$ satisfying $0< |a| < |b|$, the quantity $\log_v \left|[\infty, 0; |a|,|b|]\right|$ is equal to the hyperbolic distance between the projection of $a$ and $b$ onto the geodesic $[0,\infty]$.
\begin{lem} \label{lem_projcr}
If $a,b \in \bbP^1(k)$ satisfy $0< |a| < |b|$, then we have $$d_{\bbH}(B(0,|a|), B(0,|b|)) = \log_v \left|[\infty, 0; |a|,|b|]\right|$$ where $[u,v;x,y]$ is the cross ratio on $\bbP^1(k)$.
\end{lem}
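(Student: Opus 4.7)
The plan is to reduce both sides of the identity to the quantity $\log_v(|b|/|a|)$ via direct computation using the definitions laid out in Section~\ref{sec_pre}.

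First I would handle the left-hand side. By Lemma \ref{lem_proj_ball}, the points $B(0,|a|)$ and $B(0,|b|)$ indeed lie on the geodesic $[0,\infty]$ (they are the projections of $a$ and $b$ respectively). Since $0 < |a| < |b|$, containment of balls gives $B(0,|a|) \subset B(0,|b|)$, and hence $B(0,|a|) \le B(0,|b|)$ with respect to the partial order on $\berA$ recalled just before the definition of $d_{\bbH}$. Then the first formula defining $d_{\bbH}$ on comparable points gives immediately
\[
d_{\bbH}(B(0,|a|),B(0,|b|)) \;=\; \log_v \frac{\mathrm{diam}(B(0,|b|))}{\mathrm{diam}(B(0,|a|))} \;=\; \log_v \frac{|b|}{|a|}.
\]

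Next I would compute the right-hand side. Using the formula $[w_1,w_2,w_3,w_4] = (w_1-w_3)(w_2-w_4)/(w_1-w_4)(w_2-w_3)$ and the standard limiting convention at $\infty$, one finds
\[
[\infty, 0;\, |a|, |b|] \;=\; \lim_{w \to \infty} \frac{(w - |a|)(0 - |b|)}{(w - |b|)(0 - |a|)} \;=\; \frac{|b|}{|a|},
\]
which is a positive real number. Taking $\log_v$ of its absolute value gives $\log_v(|b|/|a|)$, matching the left-hand side.

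Since both sides have been reduced to the same expression, the identity follows. There is no real obstacle here: the lemma is essentially a translation between the metric on $\bbH_k$ in terms of diameters of balls and the archimedean cross ratio of the corresponding radii, and both sides unfold to $\log_v(|b|/|a|)$ by direct application of the definitions. The only minor care needed is in interpreting the cross ratio $[\infty,0;|a|,|b|]$ through the usual limit convention for an argument at infinity.
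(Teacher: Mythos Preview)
Your proof is correct and follows essentially the same approach as the paper: both reduce each side of the identity to $\log_v(|b|/|a|)$ by direct computation, with the left-hand side coming from the diameter formula for $d_{\bbH}$ on comparable points and the right-hand side from expanding the cross ratio at $\infty$. The paper phrases the left-hand computation via the isometry of $[B(0,|a|),B(0,|b|)]$ with $[\log_v|a|,\log_v|b|]\subset\bbR$, but this is just a restatement of the same diameter calculation you carry out.
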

\begin{proof}
	If $0<|a|<|b|$, the geodesic segment $[B(0,|a|), B(0,|b|)] \subset [0, \infty] \subset \berP$ is isometric to the interval $[\log_v |a|, \log_v |b|] \subset \bbR$ equipped with the Lebesgue measure. Therefore, we have
	$$d_{\bbH}(B(0,|a|), B(0,|b|)) =\log_v |b| - \log_v |a| =  \log_v \left|[\infty, 0; |a|,|b|]\right|.$$
\end{proof}


\section{Non-Archimedean Basmajian's identity}
In this section, we first give a geometric proof of Theorem \ref{thm_realhigherBas} when $d=2$ using Berkovic hyperbolic geometry in Section \ref{sec_pf_main_thm_d=2}.
We then prove Theorem \ref{thm_realhigherBas} in Section \ref{sec_pf_main_thm}.

\subsection{Proof of Theorem \ref{thm_realhigherBas} for $d=2$ using Berkovic hyperbolic geometry} \label{sec_pf_main_thm_d=2}
Let $\Sigma$ be a connected compact oriented surface with boundary $\partial\Sigma$ and negative Euler characteristic. Let $\alpha_1,\ldots, \alpha_m \in \pi_1\Sigma$ represent the free homotopy classes of boundaries $a_1,\ldots, a_m$. Each boundary $a_j$ is oriented so that the surface is on the right. Fix a finite area hyperbolic structure on $\Sigma$ so that $\partial\Sigma$ is totally geodesic. Let $V \subset \bbH^2$ be the universal cover of $\Sigma$ so that $\partial\pi_1\Sigma = \partial_{\infty}V = \overline{V} \cap \bbS^1$. Fix a boundary component $\alpha_j$ of $\Sigma$ and let $\alpha_j^+,\alpha_j^- \in \partial \pi_1\Sigma$ be the attracting and repelling fixed points of $\alpha_j$, respectively. We consider the interval $(\alpha_j^+, \alpha_j^-)$ as a subset of $\bbS^1$ with the given orientation.

We first observe that $\bbS^1 \setminus \partial_{\infty}V$ is a disjoint union of intervals of the form 
\begin{equation} \label{eq_I_beta}
\tilde{I}_{\beta} = (\beta^+, \beta^-)
\end{equation}
where $\beta  = g\alpha_kg^{-1}$ for some boundary element $\alpha_k$ and $g \in \pi_1\Sigma$, since the endpoints $\beta^-$ and $\beta^+$ are the endpoints of lifts of boundary elements.
\begin{lem}\label{lem_coset_beta}
Set $H_j \defeq \langle \alpha_j \rangle, j=1,\ldots,m$. Each interval $\tilde{I}_{\beta}$ correspond to a coset $gH_q$ in $\bigsqcup_{1 \le q \le m} \pi_1\Sigma / H_q.$
\end{lem}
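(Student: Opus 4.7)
\medskip

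\noindent\textbf{Proof plan.}
My plan is to exhibit an explicit bijection $\Phi$ between the set of complementary intervals $\tilde I_\beta \subset \mathbb S^1 \setminus \partial_\infty V$ and the disjoint union $\bigsqcup_{q=1}^{m} \pi_1\Sigma / H_q$ by sending $\tilde I_\beta = (g\alpha_k^+, g\alpha_k^-)$ to the coset $gH_k$. Everything reduces to verifying the map is well-defined, surjective, and injective; the geometric ingredient is that the complementary intervals of $\partial_\infty V$ in $\mathbb S^1$ correspond precisely to the lifts of the boundary components of $\Sigma$ to $V \subset \bbH^2$.

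First I would recall that, because the fixed hyperbolic structure on $\Sigma$ has totally geodesic boundary, $V \subset \bbH^2$ is the convex hull of its limit set $\partial_\infty V$, and $\bbH^2 \setminus V$ is the disjoint union of open half-planes; each such half-plane is bounded by a component of $\partial V$, which is a lift of some boundary geodesic $a_k$. This lift is the axis of a unique conjugate $\beta = g\alpha_k g^{-1}$, and its endpoints on $\mathbb S^1$ are exactly $g\alpha_k^+$ and $g\alpha_k^-$. The orientation convention on $a_k$ picks out $\beta^+$ as attracting and $\beta^-$ as repelling, which is precisely the data encoded in writing $\tilde I_\beta = (\beta^+, \beta^-)$ as in \eqref{eq_I_beta}. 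This shows every interval $\tilde I_\beta$ arises in the form $(g\alpha_k^+, g\alpha_k^-)$ for some $g$ and some $k$, so $\Phi$ is defined on all complementary intervals and is surjective onto $\bigsqcup_q \pi_1\Sigma / H_q$.

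Next I would check well-definedness and injectivity simultaneously. Suppose $(g\alpha_k^+, g\alpha_k^-) = (g'\alpha_{k'}^+, g'\alpha_{k'}^-)$ as oriented intervals. Then the two conjugates $g\alpha_k g^{-1}$ and $g'\alpha_{k'}(g')^{-1}$ share the same attracting and repelling fixed points on $\mathbb S^1$, so in particular $\alpha_k$ and $\alpha_{k'}$ are conjugate in $\pi_1\Sigma$. Since distinct boundary components of $\Sigma$ give non-conjugate primitive elements of $\pi_1\Sigma$, this forces $k = k'$. Primitivity of $\alpha_k$ now implies that the axis endpoints determine the element itself, so $g\alpha_k g^{-1} = g'\alpha_k (g')^{-1}$; equivalently $g^{-1}g'$ lies in the centralizer of $\alpha_k$, which is exactly $H_k = \langle \alpha_k \rangle$ since $\pi_1\Sigma$ is torsion-free hyperbolic and $\alpha_k$ is primitive. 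Hence $gH_k = g'H_k$, establishing that $\Phi$ is both well-defined and injective.

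The main obstacle, and the only point that uses more than bookkeeping, is the step asserting that every component of $\mathbb S^1 \setminus \partial_\infty V$ actually arises as the endpoints of a lift of some boundary element (rather than, say, being a limit of such intervals). This is where the compactness of $\Sigma$ and the total geodesicity of $\partial\Sigma$ enter: the limit set of $\pi_1\Sigma$ acting on $\bbH^2$ is the complement of countably many disjoint open arcs, one per lift of $\partial\Sigma$, and the action of $\pi_1\Sigma$ on these arcs factors through $\pi_1\Sigma / H_q$ for the appropriate $q$, giving an orbit-stabilizer identification with the coset spaces. Once this standard fact about geometrically finite Fuchsian groups is in hand, the lemma follows.
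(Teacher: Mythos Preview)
Your proposal is correct and follows essentially the same approach as the paper: both arguments reduce to showing that if $(g_1\alpha_{l'}^+,g_1\alpha_{l'}^-)=(g_2\alpha_{l}^+,g_2\alpha_{l}^-)$ then $l'=l$ (because distinct boundary components are non-conjugate) and $g_2^{-1}g_1$ lies in the stabilizer/centralizer $H_l$. Your write-up is in fact more complete than the paper's, which only verifies well-definedness of the map $\tilde I_\beta \mapsto gH_q$, whereas you explicitly discuss injectivity and surjectivity; the only minor slip is that your sentence ``\ldots so $\Phi$ is defined on all complementary intervals and is surjective onto $\bigsqcup_q \pi_1\Sigma/H_q$'' conflates two directions---surjectivity requires the (easy) observation that for every coset $gH_q$ the arc $(g\alpha_q^+,g\alpha_q^-)$ is indeed a complementary component of $\partial_\infty V$, which follows immediately from the geometric description you give but is not literally what the preceding sentence established.
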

\begin{proof}
If $\tilde{I}_{g_1\alpha_{l'}g_1^{-1}} = \tilde{I}_{g_2\alpha_lg_2^{-1}}$, then we know that
$$g_1\alpha_{l'}^{\pm} = g_2\alpha_l^{\pm}$$ and therefore $(g_2^{-1}g_1)\alpha_{l'}^{\pm} = \alpha_l^{\pm}$. Since $\alpha_{l'}$ and $\alpha_l$ are boundary elements, it follows that ${l'}=l$ and therefore $g_2^{-1} g_1 \in H_l$.
\end{proof}

Let $\rho : \pi_1\Sigma \to \PSL(2,k)$ be projective Anosov. Consider the action of $\rho(\pi_1\Sigma)$ on the Berkovic space $\bbP^{1,an}_k$.
We conjugate $\rho$ so that $\zeta^1(\alpha_j^-) = 0$ and $\zeta^1(\alpha_j^+) = \infty$. Therefore $[0,\infty] \subset \bbH_k$ is the axis of $\alpha_j$. Let $P \colon \bbP^1(k) \to [0,\infty]$ be the projection onto the axis $[0,\infty]$ given by $P(x) = B(0,|x|)$ by Lemma \ref{lem_proj_ball}. Define the map $\Psi_\rho \colon (\alpha_j^+, \alpha_j^-) \to (0,\infty)$ by
$$\Psi_\rho(x) \defeq P \circ \zeta^1(x).$$
By definition, for any $x \in (\alpha_j^+,\alpha_j^-) \subset \mathbb{S}^1$, $\Psi_\rho(x)$ is the projection of the point $\zeta^1(x) \in \mathbb{P}^1(k)$ in the limit set onto the axis $(0,\infty) \subset \mathbb{H}_k$ of $\alpha_j$.

\begin{lem} \label{lem_d=2_1}
Let $x \in (\alpha_j^+, \alpha_j^-) \subset \bbS^1$. Then we have $\ell_{\rho}(\alpha_j) = d_{\bbH}(\Psi_\rho(\alpha_j \cdot x), \Psi_\rho(x)).$
\end{lem}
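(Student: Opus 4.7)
The plan is to reduce both sides of the claimed equality to explicit computations in the coordinates $\zeta^1(\alpha_j^-) = 0$ and $\zeta^1(\alpha_j^+) = \infty$ that were fixed before the lemma. Since $\rho(\alpha_j) \in \PSL(2,k)$ fixes both $0$ and $\infty$ in $\bbP^1(k)$, it must act on $\berP$ as a diagonal transformation $z \mapsto \lambda z$, where $\lambda = \lambda_1(\rho(\alpha_j))/\lambda_2(\rho(\alpha_j))$ is the ratio of its two eigenvalues. Because $\infty$ is the attracting fixed point of $\rho(\alpha_j)$ (being the image of $\alpha_j^+$ under $\zeta^1$), Proposition 2.2 of \cite{DF} combined with the dynamics $A^n \cdot z = \lambda^n z$ forces $|\lambda| > 1$. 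In particular, $\ell_\rho(\alpha_j) = \log_v|\lambda|$ directly from the definition of translation length.

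Next I would evaluate $\Psi_\rho$ at the two points. Set $z \defeq \zeta^1(x)$. By $\rho$-equivariance of the limit map, $\zeta^1(\alpha_j \cdot x) = \rho(\alpha_j) \cdot \zeta^1(x) = \lambda z$. The transversality of $\zeta^1$, together with $x \neq \alpha_j^{\pm}$, guarantees $z \neq 0$ and $z \neq \infty$, so $|z| \in (0,\infty)$. Lemma \ref{lem_proj_ball} then gives
\[
\Psi_\rho(x) = B(0,|z|), \qquad \Psi_\rho(\alpha_j \cdot x) = B(0, |\lambda|\cdot|z|).
\]
Since $|\lambda| > 1$, we have $B(0,|z|) \subseteq B(0,|\lambda||z|)$, so these two points lie on the geodesic $[0,\infty]$ with the first below the second in the partial order on $\berP$.

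Finally, the definition of the Berkovich hyperbolic metric gives
\[
d_{\bbH}\bigl(\Psi_\rho(\alpha_j \cdot x),\, \Psi_\rho(x)\bigr) \;=\; \log_v \frac{\operatorname{diam} B(0,|\lambda||z|)}{\operatorname{diam} B(0,|z|)} \;=\; \log_v|\lambda| \;=\; \ell_\rho(\alpha_j),
\]
which is exactly the claimed identity. (Alternatively, one could invoke Lemma \ref{lem_projcr} applied to $a = z$ and $b = \lambda z$ to avoid re-deriving the diameter formula.) There is no genuine obstacle to this argument; the only subtlety worth verifying carefully is the sign of $\log_v|\lambda|$, i.e.\ that $\infty$ really is the \emph{attracting} fixed point after the normalization, so that $|\lambda|>1$ and the distance is indeed $\log_v|\lambda|$ rather than its negative.
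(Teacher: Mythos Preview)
Your argument is correct. It differs from the paper's proof in one respect worth noting: you work directly with the diagonal form of $\rho(\alpha_j)$, writing the action as $z \mapsto \lambda z$ and reading off $\ell_\rho(\alpha_j) = \log_v|\lambda|$ from the definition of translation length, then computing the two projections explicitly. The paper instead expresses $d_{\bbH}(\Psi_\rho(\alpha_j\cdot x),\Psi_\rho(x))$ via Lemma~\ref{lem_projcr} as $\log_v|\zeta^1(\alpha_j\cdot x)| - \log_v|\zeta^1(x)|$, recognizes this as $\log_v|C_\rho(\alpha_j^+,\alpha_j^-;x,\alpha_j\cdot x)|$, and then invokes Lemma~\ref{lem_2.8} (period equals translation length). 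Your route is more elementary in that it avoids the cross-ratio machinery and the appeal to Lemma~\ref{lem_2.8}; the paper's route has the advantage of staying entirely within the cross-ratio framework that drives the rest of the proof of Theorem~\ref{thm_realhigherBas}, so the lemma reads as a special case of the general pattern. Either is fine for this lemma.
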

\begin{proof}
By definition of $\Psi_\rho$ and Lemma \ref{lem_projcr}, we have
\begin{align*}
	d_{\bbH}(P\circ \zeta^1(\alpha_j \cdot x), P\circ \zeta^1(x))& = \log_v|\zeta^1(\alpha_j \cdot x)| - \log_v|\zeta^1(x)|\\
	& =  \log_v \left| C_{\rho}(\alpha_j^+, \alpha_j^-; x, \alpha_j \cdot x) \right|\\
	&=\ell_{\rho}(\alpha_j).
\end{align*}
The proof is completed.
\end{proof}

For each interval $\tilde{I}_\beta$, define $\hat{I}_{\beta} \defeq (\Psi_\rho(\beta^+), \Psi_\rho(\beta^-)) \subset [0,\infty]$. 

\begin{lem}\label{lem_proj_0} 
All but finitely many intervals $\tilde{I}_{\beta}$ satisfy $\Psi_\rho(\beta^-) = \Psi_\rho(\beta^+)$. 
\end{lem}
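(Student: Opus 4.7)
The plan is to combine uniform continuity of the limit map $\zeta^1$ with the ultrametric property of the non-Archimedean absolute value on $k$. First I would rephrase the conclusion: since $\Psi_\rho(x) = P(\zeta^1(x)) = B(0,|\zeta^1(x)|)$ by Lemma \ref{lem_proj_ball}, the equality $\Psi_\rho(\beta^+) = \Psi_\rho(\beta^-)$ is equivalent to $|\zeta^1(\beta^+)| = |\zeta^1(\beta^-)|$, which by the ultrametric inequality holds whenever
\[
  |\zeta^1(\beta^+) - \zeta^1(\beta^-)| < \max\bigl(|\zeta^1(\beta^+)|,\, |\zeta^1(\beta^-)|\bigr).
\]
The lemma therefore reduces to showing that all but finitely many gaps satisfy this bound.

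Next, I would use the $\rho$-equivariance of $\zeta^1$ to reduce to a compact region of $(\alpha_j^+,\alpha_j^-)$. After the conjugation sending $\zeta^1(\alpha_j^+) = \infty$ and $\zeta^1(\alpha_j^-) = 0$, the matrix $\rho(\alpha_j)$ acts on $\bbP^1(k)$ as $z \mapsto \lambda^2 z$, so the ratio $|\zeta^1(\beta^+)|/|\zeta^1(\beta^-)|$ is invariant under replacing $\beta$ by any conjugate $\alpha_j^n \beta \alpha_j^{-n}$. The property in the lemma is thus constant along $\alpha_j$-orbits of gaps, and it is enough to check it on representatives lying in a fundamental domain $D \subset (\alpha_j^+,\alpha_j^-)$ for the $\alpha_j$-action on the arc. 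Such a $D$ can be chosen closed in $\bbS^1$ and at positive distance from $\{\alpha_j^+,\alpha_j^-\}$.

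On the compact set $\partial\pi_1\Sigma \cap D$, the limit map $\zeta^1$ is uniformly continuous and $|\zeta^1|$ attains a strictly positive minimum $m > 0$, because it avoids the values $0$ and $\infty$ on this compact set. Fix $\varepsilon > 0$ such that $|\beta^+ - \beta^-|_{\bbS^1} < \varepsilon$ implies $|\zeta^1(\beta^+) - \zeta^1(\beta^-)| < m$. For any gap $\tilde{I}_\beta \subset D$ of $\bbS^1$-length below $\varepsilon$ the ultrametric bound displayed above is automatic, yielding $\Psi_\rho(\beta^+) = \Psi_\rho(\beta^-)$. Since the gaps in $D$ are pairwise disjoint subarcs of a bounded interval, only finitely many of them can have $\bbS^1$-length at least $\varepsilon$; these are the only possible exceptions.

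The hardest part will be correctly handling the gaps that accumulate at the fixed points $\alpha_j^\pm$, where $|\zeta^1|$ tends to $0$ or $\infty$ and no uniform lower bound is available for a direct ultrametric argument. The $\alpha_j$-equivariance step is precisely what pushes these near-boundary gaps back into the compact fundamental domain $D$, where uniform continuity applies. Once this reduction is carried out, the finiteness of the exceptional set follows from the elementary observation that a bounded interval contains only finitely many pairwise disjoint subarcs of any prescribed minimum length.
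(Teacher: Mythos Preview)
Your argument is correct and follows essentially the same strategy as the paper: both use continuity of $\zeta^1$ together with the ultrametric inequality to conclude that $|\zeta^1(\beta^+)|=|\zeta^1(\beta^-)|$ once the gap $\tilde I_\beta$ is short enough. The paper packages this via the cross ratio, computing $d_{\bbH}(\Psi_\rho(\beta^+),\Psi_\rho(\beta^-))=\log_v|C_\rho(\alpha_j^+,\alpha_j^-,\beta^+,\beta^-)|$ and asserting that the cross ratio tends to $1$ (hence has absolute value $1$) whenever $\beta^+$ and $\beta^-$ are close.

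Your version is in fact more careful on exactly the point you flag as hardest. The paper's continuity claim is delicate when $\beta^\pm$ both approach $\alpha_j^+$ or $\alpha_j^-$: the quadruple $(\alpha_j^+,\alpha_j^-,\alpha_j^+,\alpha_j^+)$ lies outside the domain $(\partial\pi_1\Sigma)^{4*}$, so the cross ratio has a genuine singularity there, and ``$\beta^+$ close to $\beta^-$'' does not by itself force the cross ratio near $1$. Your reduction via $\alpha_j$-equivariance to a compact fundamental arc $D$ at positive distance from $\alpha_j^\pm$ is precisely what supplies the missing uniformity. One small technicality: a gap may straddle $\partial D$, so not every $\alpha_j$-orbit has a representative \emph{contained} in $D$; this is harmless if you run the uniform-continuity step on the slightly larger compact arc $D\cup\alpha_j D$, which still avoids $\alpha_j^\pm$ and into which every orbit has a representative fitting entirely.
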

\begin{proof}
By definition of $\Psi_\rho$ and Lemma \ref{lem_projcr}, we have
\begin{align*}
d_{\bbH}(P\circ \zeta^1(\beta^+), P\circ \zeta^1(\beta^-))& = \log_v|\zeta^1(\beta^+)| - \log_v|\zeta^1(\beta^-)|\\
& =  \log_v \left| C_{\rho}(\alpha_j^+, \alpha_j^-; \beta^+, \beta^-) \right|.
\end{align*}
Since $\zeta^1 \colon \partial \pi_1\Sigma \to \bbP^1(k)$ is continuous, if $\beta^+$ and $\beta^-$ are close, the cross ratio $C_{\rho}(\alpha_j^+, \alpha_j^-; \beta^+, \beta^-)$ is close to 1. Then the non-Archimedean absolute value gives $|C_{\rho}(\alpha_j^+, \alpha_j^-; \beta^+, \beta^-)| = 1$. Therefore there are only finitely many $\widetilde{I}_\beta$ with $\Psi_\rho(\beta^+) \neq \Psi_\rho(\beta^-)$.
The proof is completed.
\end{proof}

By Lemma \ref{lem_proj_0}, there are only finitely many $\hat{I}_\beta$ which are not a singleton. Moreover, we note that $\hat{I}_\beta$ may or may not have the same orientation as $(0,\infty)$.
By Lemma \ref{lem_d=2_1}, $\bbT \defeq (0,\infty) / \ell_{\rho}(\alpha_j)\bbZ$ is a fundamental domain of $\alpha_j$ in $\bbH_k$. Let $\pi \colon (0,\infty) \to \bbT$ be the projection map. Define $I_\beta \defeq \pi(\hat{I}_{\beta})$.

\begin{lem} \label{lem_d=2_decomp}
We have the following decomposition of $\bbT$: $$\bbT =\pi(\Psi_\rho(\partial_{\infty}V)) \sqcup \left( \bigsqcup_{q=1}^m \bigcup_{\beta \in \mathcal{L}_{j,q}} I_{\beta} \right)$$ where $\pi(\Psi_\rho(\partial_{\infty}V))$ is a finite set and  $\mathcal{L}_{j,q}$ is the subset of the set of double cosets $\langle\alpha_j\rangle \backslash \pi_1\Sigma/\langle \alpha_q \rangle$.
\end{lem}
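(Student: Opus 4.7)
The plan is to push the $\bbS^1$-decomposition $\bbS^1 = \partial_\infty V \sqcup \bigsqcup_\beta \tilde I_\beta$ forward through the composition $\pi \circ \Psi_\rho$, restricting attention to the arc $(\alpha_j^+, \alpha_j^-)$, and then to regroup the resulting intervals by their $\alpha_j$-orbits. Since $\alpha_j^\pm \in \partial_\infty V$, each $\tilde I_\beta$ lies entirely in one of the two open arcs of $\bbS^1 \setminus \{\alpha_j^+, \alpha_j^-\}$, and we focus on those inside $(\alpha_j^+, \alpha_j^-)$. Applying $\Psi_\rho$ on $\partial_\infty V$ and using the endpoints of each such $\tilde I_\beta$ to define $\hat I_\beta$, then projecting via $\pi$, yields the (a priori not yet disjoint) covering
$$\bbT = \pi\bigl(\Psi_\rho(\partial_\infty V \cap (\alpha_j^+, \alpha_j^-))\bigr) \cup \bigcup_{\tilde I_\beta \subset (\alpha_j^+, \alpha_j^-)} I_\beta.$$

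The re-indexing step proceeds via Lemma \ref{lem_coset_beta}: each $\tilde I_\beta$ in $(\alpha_j^+, \alpha_j^-)$ corresponds to a coset $g H_q$ with $\beta = g \alpha_q g^{-1}$, and $\alpha_j^n \cdot \tilde I_\beta = \tilde I_{\alpha_j^n \beta \alpha_j^{-n}}$ corresponds to the coset $\alpha_j^n g H_q$. Since $\pi$ quotients $[0,\infty]$ by translation by $\ell_\rho(\alpha_j)$, $\alpha_j$-equivalent $\tilde I_\beta$'s yield the same $I_\beta$ in $\bbT$, so the intervals $\{I_\beta\}$ are naturally indexed by double cosets $H_j g H_q \in H_j \backslash \pi_1\Sigma / H_q$ (with one index $q$ for each boundary component). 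By Lemma \ref{lem_proj_0}, only finitely many of these double cosets give non-degenerate $I_\beta$, and $\mathcal L_{j,q}$ is defined to be this finite subset.

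For the finiteness of $\pi(\Psi_\rho(\partial_\infty V \cap (\alpha_j^+, \alpha_j^-)))$, I would use that the endpoints $\{\beta^\pm\}$ are dense in $\partial_\infty V \cap (\alpha_j^+, \alpha_j^-)$ and that, by Lemma \ref{lem_proj_0}, all but finitely many $\tilde I_\beta$'s (modulo $\alpha_j$) satisfy $\Psi_\rho(\beta^+) = \Psi_\rho(\beta^-)$. Continuity of $\zeta^1$ together with the non-Archimedean ultrametric property then force the full image of $\partial_\infty V \cap (\alpha_j^+, \alpha_j^-)$ in $\bbT$ to be exactly the finite set of endpoints of the non-degenerate $I_\beta$'s.

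The main obstacle will be establishing disjointness: that the open intervals $\{I_\beta\}_{q, \beta \in \mathcal L_{j,q}}$ are pairwise disjoint and disjoint from $\pi(\Psi_\rho(\partial_\infty V))$. The $\tilde I_\beta$'s are pairwise disjoint open arcs in $\bbS^1$, and so are all their $\alpha_j$-translates, but this needs to be upgraded to a tree-theoretic disjointness statement in $\bbH_k$. Each $\hat I_\beta$ is the projection onto the axis $[0,\infty]$ of the geodesic in $\bbH_k$ joining $\zeta^1(\beta^+)$ and $\zeta^1(\beta^-)$, i.e., of the axis of $\rho(\beta)$; in an $\bbR$-tree, projections of geodesics with disjoint endpoints onto a common geodesic are either disjoint or overlap only at endpoints. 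Transversality of the limit maps (Proposition 3.5 of \cite{PSW}) ensures distinct double cosets yield distinct geodesics in $\bbH_k$, so their projections, taken modulo $\alpha_j$, have disjoint open interiors in $\bbT$, with any endpoint overlaps already belonging to the finite set $\pi(\Psi_\rho(\partial_\infty V))$.
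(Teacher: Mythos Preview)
Your overall approach---push the circle decomposition $(\alpha_j^+,\alpha_j^-)=\bigl(\partial_\infty V\cap(\alpha_j^+,\alpha_j^-)\bigr)\sqcup\bigsqcup_\beta\tilde I_\beta$ forward through $\pi\circ\Psi_\rho$, re-index by double cosets via the shift $\hat I_{\alpha_j\beta\alpha_j^{-1}}=\hat I_\beta-\ell_\rho(\alpha_j)$, and invoke Lemma~\ref{lem_proj_0} for finiteness---is exactly what the paper does, and your finiteness argument (density of the $\beta^\pm$ in $\partial_\infty V$ plus continuity plus the ultrametric) is in fact more careful than the paper's one-line appeal to Lemma~\ref{lem_proj_0}.

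The problem is your final paragraph on disjointness. First, the tree-theoretic claim is simply false: in an $\bbR$-tree, two bi-infinite geodesics with four distinct endpoints can share an arbitrarily long common segment, so their projections onto a third geodesic can coincide on an open interval (picture two axes that both run along the axis of $\alpha_j$ for a stretch before branching off at different type~II vertices). Transversality of the limit maps only separates endpoints in $\bbP^1(k)$; it says nothing about overlaps inside $\bbH_k$. Second, and more to the point, strict disjointness of the $I_\beta$ is neither asserted nor true in general. The innermost union in the statement is $\bigcup$, not $\bigsqcup$; the paper explicitly remarks just before the lemma that $\hat I_\beta$ ``may or may not have the same orientation as $(0,\infty)$''; and the examples in Section~\ref{sec_examples} produce representations with \emph{negative} terms in the identity, which forces the $I_\beta$'s to overlap in $\bbT$. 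The paper's own proof makes no attempt to establish interior disjointness: it only argues covering (from continuity of $\pi\circ\Psi_\rho$) and the double-coset indexing, and the $\sqcup$ symbols in the displayed statement should be read informally. The actual content used downstream is that the \emph{signed} lengths $\nu_\bbH(I_\beta)$ sum to $\ell_\rho(\alpha_j)$, which follows from the degree-one circle map $\pi\circ\Psi_\rho$ and does not require disjoint interiors. Drop the last paragraph and your argument is aligned with the paper.
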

\begin{proof}
By Lemma \ref{lem_proj_0}, the projection $\Psi_\rho(\partial_\infty V)$ of the limit set onto $[0,\infty]$ is a finite set. Therefore $\pi(\Psi_\rho(\partial_\infty V))$ is a finite set. Since the map $\pi \circ \Psi_\rho$ is continuous, $\bbT$ is equal to the image of the union of non-trivial $\hat{I}_\beta$'s under $\pi \colon (0,\infty) \to \bbT$. Moreover, there is a one-to-one correspondence between the sets $I_\beta$ and the double cosets $\langle \alpha_j \rangle \backslash \pi_1\Sigma/\langle \alpha_q \rangle$. To see this, we observe that
$\hat{I}_{\alpha_j\beta\alpha_j^{-1}} = (\Psi_\rho(\alpha_j\cdot \beta^+), \Psi_\rho(\alpha_j\cdot \beta^-)) = \hat{I}_\beta - \ell_\rho(\alpha_j)$. This completes the proof.
\end{proof}

Let $\nu_{\bbH}$ be the hyperbolic measure on $[0,\infty]$; namely, $\nu_{\bbH}((a,b)) = d_{\bbH}(a,b) = \log_v (b/a)$ for any interval $(a,b)$ with $0<a<b<\infty$. 
\begin{lem} \label{lem_2}
We have $\nu_{\bbH}(\bbT) = \ell_{\rho}(\alpha_j)$ and $$\nu_{\bbH}(I_{\beta}) = \log_v \left| C_{\rho}(\alpha_j^+, \alpha_j^-; w \cdot \alpha_q^+, w \cdot \alpha_q^-) \right|$$ where $w$ and $q$ are as in Lemma \ref{lem_coset_beta}.
\end{lem}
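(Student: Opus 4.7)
The plan is to handle the two claims separately. The first equality $\nu_{\bbH}(\bbT) = \ell_\rho(\alpha_j)$ is essentially definitional: by construction $\bbT = (0,\infty)/\ell_\rho(\alpha_j)\bbZ$, and the hyperbolic metric $d_\bbH$ on $(0,\infty)\subset [0,\infty]$ is the pullback of the Lebesgue metric under $\log_v$, so Lemma \ref{lem_d=2_1} identifies $\ell_\rho(\alpha_j)$ with the $\nu_\bbH$-mass of any fundamental domain for the $\alpha_j$-translation on $[0,\infty]$.

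For the second equality, I first unwind the indexing. By Lemma \ref{lem_coset_beta}, a representative $w \in \mathcal{L}_{j,q}$ corresponds to $\beta = w\alpha_q w^{-1}$, so that $\beta^+ = w\cdot \alpha_q^+$ and $\beta^- = w\cdot\alpha_q^-$. Applying Lemma \ref{lem_projcr} to the endpoints of $\hat{I}_\beta = (\Psi_\rho(\beta^+), \Psi_\rho(\beta^-))$ gives
$$\nu_\bbH(\hat{I}_\beta) \;=\; \log_v\frac{\bigl|\zeta^1(\beta^-)\bigr|}{\bigl|\zeta^1(\beta^+)\bigr|},$$
read as a \emph{signed} length in accordance with the orientation of $\hat{I}_\beta$ relative to $(0,\infty)$. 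The remaining step is to identify this ratio with $|C_\rho(\alpha_j^+,\alpha_j^-;\beta^+,\beta^-)|$: using the normalization $\zeta^1(\alpha_j^+)=\infty$ and $\zeta^1(\alpha_j^-)=0$, lifts $\bar\varphi = e_2^*$ and $\bar\varphi' = e_1^*$ pair against lifts of $\zeta^1(\beta^\pm)$ in the obvious way, so formula (\ref{eq_crossratio}) collapses to $|C_\rho| = |\zeta^1(\beta^-)|/|\zeta^1(\beta^+)|$. This is a short direct computation in $\PGL(2,k)$.

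To finish, I need to argue that $\nu_\bbH(I_\beta) = \nu_\bbH(\hat{I}_\beta)$, i.e.\ that the projection $\pi \colon (0,\infty) \to \bbT$ is injective on $\hat{I}_\beta$ when $\beta$ is indexed by a double coset representative in $\mathcal{L}_{j,q}$. This is built into the setup of Lemma \ref{lem_d=2_decomp}: two different intervals coming from the same double coset differ by a translation by $\ell_\rho(\alpha_j)$, and the choice of the representative $w \in \mathcal{L}_{j,q}$ picks out precisely one interval's worth per coset, so $\pi$ restricts to an isometry on $\hat{I}_\beta$ (which pushes signed measure to signed measure).

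The main obstacle I anticipate is bookkeeping the sign convention: $\log_v|C_\rho|$ may be negative when $|\zeta^1(\beta^+)| > |\zeta^1(\beta^-)|$, so $\nu_\bbH$ here must be interpreted as the signed length of the oriented interval $\hat{I}_\beta$ rather than as a genuine (nonnegative) measure. This signed interpretation is exactly what Lemma \ref{lem_d=2_decomp} uses to produce the cancellation needed for the finite signed decomposition of $\bbT$, and it is the feature that ultimately turns Basmajian's identity into a finite signed sum in the non-Archimedean setting.
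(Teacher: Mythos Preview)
Your proposal is correct and follows essentially the same route as the paper: the paper also deduces $\nu_{\bbH}(\bbT)=\ell_\rho(\alpha_j)$ directly from the construction of $\bbT$, and computes $\nu_{\bbH}(I_\beta)=\log_v|\zeta^1(\beta^-)|-\log_v|\zeta^1(\beta^+)|=\log_v|[\infty,0;\zeta^1(\beta^+),\zeta^1(\beta^-)]|$, which it then identifies with $\log_v|C_\rho(\alpha_j^+,\alpha_j^-;w\alpha_q^+,w\alpha_q^-)|$. Your extra paragraph on the injectivity of $\pi$ on $\hat I_\beta$ is unnecessary (and the isometry claim is not quite justified), since once $\nu_{\bbH}$ is read as a signed length the equality $\nu_{\bbH}(I_\beta)=\nu_{\bbH}(\hat I_\beta)$ holds tautologically; the paper simply does not raise this point.
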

\begin{proof}
By construction of $\bbT$, we have $\nu_{\bbH}(\bbT) = \ell_{\rho}(\alpha_j)$.
We compute
\begin{align*}
	\nu_{\bbH}(I_{\beta}) &= \log_v|\zeta^1(\beta^-)| -  \log_v|\zeta^1(\beta^+)|\\
	& = \log_v |[\infty, 0; \zeta^1(\beta^+), \zeta^1(\beta^-)]|\\
	& =  \log_v \left| C_{\rho}(\alpha_j^+, \alpha_j^-; w \cdot \alpha_q^+, w \cdot \alpha_q^-) \right|
\end{align*}
where $w$ is the unique representative of the double coset corresponding to $I_{\beta}$. Here $w$ is unique if we put an ordering on a symmetric generating set of $\pi_1\Sigma$ and let $w$ be the double coset representative that is the shortest in word length and the largest with respect to the ordering.
\end{proof}

\begin{proof}[Proof of Theorem \ref{thm_realhigherBas} for $d=2$]
Fix a boundary component $\alpha_j$.
By Lemmas \ref{lem_d=2_decomp} and \ref{lem_2}, we have
\begin{align*}
\ell_\rho(\alpha_j) &= \nu_{\bbH}(\bbT) \\
& = \nu_{\bbH}(\pi(\Psi_\rho(\partial_{\infty}V))) +
\sum_{q=1}^m\sum_{w \in \mathcal{L}_{j,q}} \nu_{\bbH}(I_\beta)\\
& = 0+ \sum_{q=1}^m\sum_{w \in \mathcal{L}_{j,q}}  \log_v \left| C_{\rho}(\alpha_j^+, \alpha_j^-; w \cdot \alpha_q^+, w \cdot \alpha_q^-) \right|.
\end{align*}
where the series on the right hand side is a finite sum. The identity is obtained by summing over $\alpha_j$'s.
\end{proof}

\subsection{Proof of Theorem \ref{thm_realhigherBas}} \label{sec_pf_main_thm}
The proof of Theorem \ref{thm_realhigherBas} in general does not use any geometry. Instead, we consider the function $\Phi_\rho$ defined in \eqref{eq_Phi}.

Let $\Sigma$ be as in the previous section. Fix a boundary component $\alpha_j$ of $\Sigma$ and let $\alpha_j^+,\alpha_j^- \in \partial \pi_1\Sigma$ be the attracting and repelling fixed points of $\alpha_j$, respectively.
For $d\ge 2$, let $\rho : \pi_1\Sigma \to \PSL(d,k)$ be projective Anosov. To ease notation, we denote by $\zeta^1 = \zeta^1_{\rho}$. Let $z$ be a reference point in $(\alpha_j^+, \alpha_j^-)$. Define $\Phi_\rho \colon (\alpha_j^+, \alpha_j^-) \to \mathbb{R}$ by
\begin{equation} \label{eq_Phi}
\Phi_\rho(x) \defeq \log_v |C_\rho(\alpha_j^+, \alpha_j^-,x,z)|.
\end{equation}

\begin{lem}
For any $x \in (\alpha_j^+,\alpha_j^-)$, we have
$\Phi_\rho(\alpha_j \cdot x) = \Phi_\rho(x) - \ell_\rho(\alpha_j).$
\end{lem}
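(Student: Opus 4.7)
The plan is to reduce the identity to the already-established fact that the period equals the translation length (Lemma \ref{lem_2.8}), combined with the multiplicativity of the cross ratio (Lemma \ref{lem_pos}(3)). Concretely, I want to insert the reference point $x$ between $\alpha_j\cdot x$ and $z$ inside the last two slots of the cross ratio, so that one factor is the period-type cross ratio $C_\rho(\alpha_j^+,\alpha_j^-,x,\alpha_j\cdot x)$ and the other is $C_\rho(\alpha_j^+,\alpha_j^-,x,z)$.

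First I would apply Lemma \ref{lem_pos}(3) with $u=\alpha_j\cdot x$, $v=z$, $v'=x$ to obtain
\[
C_\rho(\alpha_j^+,\alpha_j^-,\alpha_j\cdot x,z) \;=\; C_\rho(\alpha_j^+,\alpha_j^-,\alpha_j\cdot x, x)\cdot C_\rho(\alpha_j^+,\alpha_j^-,x,z).
\]
Then by Lemma \ref{lem_pos}(4) the first factor is the reciprocal of $C_\rho(\alpha_j^+,\alpha_j^-,x,\alpha_j\cdot x)$. By the very definition of the period,
\[
P_\rho(\alpha_j) \;=\; \log_v\bigl|C_\rho(\alpha_j^+,\alpha_j^-,x,\alpha_j\cdot x)\bigr|,
\]
which is valid for any $x\in \partial\pi_1\Sigma\setminus\{\alpha_j^+,\alpha_j^-\}$; in particular for our $x\in(\alpha_j^+,\alpha_j^-)$. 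By Lemma \ref{lem_2.8} this equals $\ell_\rho(\alpha_j)$.

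Taking $\log_v|\cdot|$ of the multiplicative identity and using that $\log_v|w^{-1}|=-\log_v|w|$, I get
\[
\log_v\bigl|C_\rho(\alpha_j^+,\alpha_j^-,\alpha_j\cdot x,z)\bigr| \;=\; -\ell_\rho(\alpha_j) \;+\; \log_v\bigl|C_\rho(\alpha_j^+,\alpha_j^-,x,z)\bigr|,
\]
which is exactly $\Phi_\rho(\alpha_j\cdot x)=\Phi_\rho(x)-\ell_\rho(\alpha_j)$.

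I do not foresee a genuine obstacle here; the only thing to verify is that the points fed into the cross ratio lie in the admissible set $(\partial\pi_1\Sigma)^{4*}$, i.e.\ that $\alpha_j^+,\alpha_j^-,\alpha_j\cdot x,x,z$ are pairwise distinct in the required slots. Since $x,z\in(\alpha_j^+,\alpha_j^-)$ and $\alpha_j$ acts on $\partial\pi_1\Sigma$ with $\alpha_j^\pm$ as its only fixed points, we have $\alpha_j\cdot x\neq \alpha_j^\pm$ and $\alpha_j\cdot x\neq x$, so all invocations of Lemma \ref{lem_pos} are legitimate.
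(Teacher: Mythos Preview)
Your proof is correct and follows essentially the same approach as the paper: both use the multiplicativity of the cross ratio (Lemma~\ref{lem_pos}(3)) to split $C_\rho(\alpha_j^+,\alpha_j^-,\alpha_j\cdot x,z)$ into the period factor and $C_\rho(\alpha_j^+,\alpha_j^-,x,z)$, and then invoke Lemma~\ref{lem_2.8}. The only cosmetic difference is that you insert $x$ via (3) and then flip with (4), whereas the paper applies (3) in the form $C_\rho(\alpha_j^+,\alpha_j^-,x,z)=C_\rho(\alpha_j^+,\alpha_j^-,x,\alpha_j\cdot x)\cdot C_\rho(\alpha_j^+,\alpha_j^-,\alpha_j\cdot x,z)$ and solves directly for the desired term; your extra check of admissibility in $(\partial\pi_1\Sigma)^{4*}$ is a welcome addition.
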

\begin{proof}
By definition of $\Phi_\rho$, Lemma \ref{lem_pos} (3) and Lemma \ref{lem_2.8}, we have
\begin{align*}
\Phi_\rho(\alpha_j \cdot x) &= \log_v |C_\rho(\alpha_j^+,\alpha_j^-,\alpha_j \cdot x,z)|\\
&= \log_v \frac{|C_\rho(\alpha_j^+,\alpha_j^-, x,z)|}{|C_\rho(\alpha_j^+,\alpha_j^-, x,\alpha_j \cdot x)|}\\
&=\Phi_\rho(x) - \ell_\rho(\alpha_j).
\end{align*}
The proof is completed.
\end{proof}

\begin{lem} \label{lem_48}
For each interval $\tilde{I}_\beta$, we have
$\Phi_\rho(\beta^+)-\Phi_\rho(\beta^-) = \log_v |C_\rho(\alpha_j^+,\alpha_j^-,g\alpha_q^+,g\alpha_q^-)|$, where $g$ and $q$ are as in Lemma \ref{lem_coset_beta}.
\end{lem}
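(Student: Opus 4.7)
The plan is to reduce the claim to a direct application of the cocycle property of the cross ratio, namely Lemma \ref{lem_pos}(3), together with the observation that $\beta^+ = g \cdot \alpha_q^+$ and $\beta^- = g \cdot \alpha_q^-$ by the description of $\tilde{I}_\beta$ in \eqref{eq_I_beta} and Lemma \ref{lem_coset_beta}.

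First I would unfold the definition of $\Phi_\rho$ so that the left-hand side reads
\begin{equation*}
\Phi_\rho(\beta^+) - \Phi_\rho(\beta^-) = \log_v \bigl| C_\rho(\alpha_j^+,\alpha_j^-,\beta^+,z) \bigr| - \log_v \bigl| C_\rho(\alpha_j^+,\alpha_j^-,\beta^-,z) \bigr|.
\end{equation*}
Then I would apply Lemma \ref{lem_pos}(3) with $u = \beta^+$, $v' = \beta^-$, $v = z$ to get the multiplicative identity
\begin{equation*}
C_\rho(\alpha_j^+,\alpha_j^-,\beta^+,z) = C_\rho(\alpha_j^+,\alpha_j^-,\beta^+,\beta^-) \cdot C_\rho(\alpha_j^+,\alpha_j^-,\beta^-,z),
\end{equation*}
which, after taking $\log_v|\cdot|$ and rearranging, yields
\begin{equation*}
\Phi_\rho(\beta^+) - \Phi_\rho(\beta^-) = \log_v \bigl| C_\rho(\alpha_j^+,\alpha_j^-,\beta^+,\beta^-) \bigr|.
\end{equation*}

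To finish, I would invoke the identification $\beta^\pm = g \cdot \alpha_q^\pm$ provided by \eqref{eq_I_beta} and Lemma \ref{lem_coset_beta} to rewrite the right-hand side as $\log_v |C_\rho(\alpha_j^+,\alpha_j^-,g\alpha_q^+,g\alpha_q^-)|$, completing the proof.

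There is essentially no obstacle here: the one subtle point to note is that the reference base point $z$ in the definition of $\Phi_\rho$ must lie in $(\alpha_j^+,\alpha_j^-)$ and be distinct from $\beta^\pm$, but this is ensured by the condition $(x,y,u,v) \in (\partial \pi_1 \Sigma)^{4*}$ under which the cross ratio is defined, so the cocycle identity of Lemma \ref{lem_pos}(3) applies cleanly. The key input is simply the multiplicativity of $C_\rho$ in its last two entries; the cancellation of the base point $z$ is the whole content of the lemma.
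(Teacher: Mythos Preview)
Your proof is correct and follows essentially the same approach as the paper: unfold $\Phi_\rho$, use the cocycle identity for the cross ratio to cancel the reference point $z$, and then identify $\beta^\pm = g\cdot\alpha_q^\pm$. The only cosmetic difference is that the paper invokes Lemma~\ref{lem_pos}(4) to invert one factor before applying (3), whereas you apply (3) directly with $v' = \beta^-$; both routes are equivalent.
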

\begin{proof}
By definition of $\Phi_\rho$ and Lemma \ref{lem_pos} (4) and (3), we have
\begin{align*}
\Phi_\rho(\beta^+)-\Phi_\rho(\beta^-) &= \log_v \frac{|C_\rho(\alpha_j^+,\alpha_j^-,\beta^+,z)|}{|C_\rho(\alpha_j^+,\alpha_j^-,\beta^-,z)|}\\
&= \log_v \left(|C_\rho(\alpha_j^+,\alpha_j^-,\beta^+,z)|\cdot |C_\rho(\alpha_j^+,\alpha_j^-,z,\beta^-)|\right)\\
&= \log_v |C_\rho(\alpha_j^+,\alpha_j^-,\beta^+,\beta^-)|\\
&=\log_v |C_\rho(\alpha_j^+,\alpha_j^-,g\alpha_q^+,g\alpha_q^-)|.
\end{align*}
The proof is completed.
\end{proof}

\begin{lem} \label{lem_finitely_many}
All but finitely many intervals $\widetilde{I}_\beta$ satisfy $|\Phi_\rho(\beta^-)-\Phi_\rho(\beta^+)| = 0$.
\end{lem}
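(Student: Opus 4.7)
The plan is to reduce the statement to the one already essentially proved for the $d=2$ case in Lemma \ref{lem_proj_0}, the key input being the non-Archimedean nature of $k$. By Lemma \ref{lem_48}, the quantity $\Phi_\rho(\beta^+)-\Phi_\rho(\beta^-)$ equals $\log_v|C_\rho(\alpha_j^+,\alpha_j^-,\beta^+,\beta^-)|$, so it suffices to show that for all but finitely many $\beta$ one has $|C_\rho(\alpha_j^+,\alpha_j^-,\beta^+,\beta^-)|=1$.

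First I would exploit the disjointness of the complementary intervals $\widetilde{I}_\beta \subset \mathbb{S}^1$: since these intervals are pairwise disjoint in a compact space, for every $\epsilon>0$ only finitely many of them can have diameter larger than $\epsilon$. Consequently, for all but finitely many $\beta$ the endpoints $\beta^+$ and $\beta^-$ are arbitrarily close in $\partial\pi_1\Sigma$. Next I would invoke continuity: by Section \ref{subsec_cr} the limit maps $\zeta^1$ and $\zeta^{d-1}$ are continuous, hence $C_\rho$ is continuous on $(\partial\pi_1\Sigma)^{4*}$, and Lemma \ref{lem_pos}(2) gives $C_\rho(\alpha_j^+,\alpha_j^-,y,y)=1$ for any admissible $y$. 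Therefore $C_\rho(\alpha_j^+,\alpha_j^-,\beta^+,\beta^-)$ can be made arbitrarily close to $1$ for all but finitely many $\beta$.

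Finally, I would use the non-Archimedean property, which is the crux of the lemma and the same trick used in Lemma \ref{lem_proj_0}. Since $k$ is a local field with discrete valuation, its value group $|k^\times|$ is discrete in $\mathbb{R}_{>0}$; hence there exists $\delta>0$ such that whenever $|C-1|<\delta$ the ultrametric inequality forces $|C|=|(C-1)+1|\le\max(|C-1|,1)=1$, and similarly $|C|<1$ would give $|C-1|=1$, a contradiction, so $|C|=1$. Applying this to $C=C_\rho(\alpha_j^+,\alpha_j^-,\beta^+,\beta^-)$, all but finitely many intervals satisfy $|C_\rho|=1$, so that $\log_v|C_\rho|=0$ and hence $|\Phi_\rho(\beta^-)-\Phi_\rho(\beta^+)|=0$ as claimed. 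I do not anticipate any serious obstacle: the argument is a direct transposition of the geometric proof of Lemma \ref{lem_proj_0} with $\Psi_\rho$ replaced by $\Phi_\rho$, and the essential mechanism is simply that a continuous function into $k$ which is close to $1$ must, by discreteness of $|k^\times|$, have absolute value exactly $1$.
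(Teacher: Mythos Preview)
Your proposal is correct and follows essentially the same approach as the paper's proof: reduce via Lemma~\ref{lem_48} to the cross ratio, use continuity of the limit maps to force $C_\rho(\alpha_j^+,\alpha_j^-,\beta^+,\beta^-)$ close to $1$ for all but finitely many $\beta$, and then invoke the non-Archimedean property to conclude that its absolute value is exactly $1$. Your write-up is in fact more careful than the paper's in two respects: you make explicit why only finitely many $\widetilde{I}_\beta$ can have large diameter (disjointness in a compact set), and you spell out the ultrametric step rather than simply asserting it.
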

\begin{proof}
As in the proof of Lemma \ref{lem_48}, we have
\begin{align*}
\Phi_\rho(\beta^+)-\Phi_\rho(\beta^-) 
&= \log_v |C_\rho(\alpha_j^+,\alpha_j^-,\beta^+,\beta^-)|.
\end{align*}
Since $\zeta^1 \colon \partial \pi_1\Sigma \to \bbP^1(k)$ is continuous, if $\beta^+$ and $\beta^-$ are close, the cross ratio $C_{\rho}(\alpha_j^+, \alpha_j^-; \beta^+, \beta^-)$ is close to 1. Then the non-Archimedean absolute value gives $|C_{\rho}(\alpha_j^+, \alpha_j^-; \beta^+, \beta^-)| = 1$. Therefore there are only finitely many $\widetilde{I}_\beta$ with $\Psi_\rho(\beta^+) \neq \Psi_\rho(\beta^-)$.
The proof is completed.
\end{proof}

Let $\bbT \defeq \mathbb{R} / \ell_{\rho}(\alpha_j)\bbZ$ be a fundamental domain of $\alpha_j$ in $\bbR$. Let $\pi \colon \bbR \to \bbT$ be the projection map. For each interval $\tilde{I}_\beta$, we denote by $\hat{I}_\beta \subset \mathbb{R}$ the interval $(\Phi_\rho(\beta^-),\Phi_\rho(\beta^+))$ or $(\Phi_\rho(\beta^+),\Phi_\rho(\beta^-))$.

\begin{lem} \label{lem_1}
We have the following decomposition of $\bbT$: $$\bbT =\pi(\Phi_\rho(\partial_{\infty}V)) \sqcup \left( \bigsqcup_{q=1}^m \bigcup_{\beta \in \mathcal{L}_{j,q}} \pi(\hat{I}_{\beta}) \right)$$ where $\pi(\Phi_\rho(\partial_{\infty}V))$ is a finite set and $\mathcal{L}_{j,q}$ is the collection of double cosets $\langle\alpha_j\rangle \backslash \pi_1\Sigma/\langle \alpha_q \rangle$.
\end{lem}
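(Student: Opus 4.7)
The proof should parallel that of Lemma \ref{lem_d=2_decomp} in the $d=2$ case: the algebraically defined $\Phi_\rho$ plays the role of the geometric projection $\Psi_\rho$ onto the axis, and Lemma \ref{lem_finitely_many} replaces Lemma \ref{lem_proj_0}. Three items must be verified: (a) $\pi(\Phi_\rho(\partial_\infty V))$ is a finite set, (b) the arcs $\pi(\hat I_\beta)$ together with this finite set cover $\bbT$, and (c) distinct double cosets yield disjoint contributions.

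For (a), I would first strengthen Lemma \ref{lem_finitely_many} to the statement that $\Phi_\rho$ is \emph{locally constant} on $\partial_\infty V \cap (\alpha_j^+,\alpha_j^-)$. Using Lemma \ref{lem_pos}(3)--(4), one computes
\begin{equation*}
\Phi_\rho(x) - \Phi_\rho(y) \;=\; \log_v \bigl|C_\rho(\alpha_j^+,\alpha_j^-,x,y)\bigr|
\end{equation*}
for distinct $x,y\in(\alpha_j^+,\alpha_j^-)$. Continuity of $\zeta^1$ and $C_\rho$ forces the cross ratio to approach $C_\rho(\alpha_j^+,\alpha_j^-,x,x)=1$ as $y\to x$ (by Lemma \ref{lem_pos}(2)), and since the value group $|k^\times|$ is discrete, proximity to $1$ forces exact equality, so $\Phi_\rho(x)=\Phi_\rho(y)$ for $y$ sufficiently close to $x$. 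The equivariance $\Phi_\rho(\alpha_j\cdot x) = \Phi_\rho(x)-\ell_\rho(\alpha_j)$ then shows that $\pi\circ\Phi_\rho$ descends to the compact quotient $(\partial_\infty V\setminus\{\alpha_j^\pm\})/\langle\alpha_j\rangle$, and a locally constant map on a compact space has finite image.

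For (b), I would invoke Lemma \ref{lem_coset_beta} to identify the gap intervals $\tilde I_\beta$ in $(\alpha_j^+,\alpha_j^-)$ with cosets in $\bigsqcup_q \pi_1\Sigma/H_q$, with the $\alpha_j$-action matching left multiplication on cosets, so that $\alpha_j$-orbits of gaps correspond to double cosets indexed by $\mathcal{L}_{j,q}$. The finitely many bad gaps (those with $\Phi_\rho(\beta^+)\neq\Phi_\rho(\beta^-)$) divide the limit-set portion of $(\alpha_j^+,\alpha_j^-)$ into arcs on each of which $\Phi_\rho$ is constant, while at each bad gap $\Phi_\rho$ jumps by $\Phi_\rho(\beta^+)-\Phi_\rho(\beta^-)$. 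Equivariance forces these signed jumps to sum to $-\ell_\rho(\alpha_j)$ across one period of the quotient circle, so the constant values (accounted for by $\pi(\Phi_\rho(\partial_\infty V))$) together with the jump intervals $\pi(\hat I_\beta)$ exhaust $\bbT$.

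The chief obstacle is the disjointness (c). In the $d=2$ case, $\Psi_\rho$ is monotone along the axis of $\alpha_j$, so disjointness of the $\hat I_\beta$'s is automatic; for general $d$, the algebraically defined $\Phi_\rho$ need not be monotone in the circular order of gaps, so individual bad-gap intervals may overlap in $\bbR$ and their projections may overlap in $\bbT$. I would interpret the $\sqcup$ in the statement as a labelled disjoint union indexed by $\beta\in\mathcal{L}_{j,q}$, each $\hat I_\beta$ being recorded with the sign of $\Phi_\rho(\beta^+)-\Phi_\rho(\beta^-)$. The next lemma, analogous to Lemma \ref{lem_2}, then sums these signed contributions against the standard measure on $\bbT$ to recover $\ell_\rho(\alpha_j)$ exactly, with set-theoretic overlaps cancelling. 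This signed behaviour is precisely the source of the possibly negative terms in the final Basmajian identity.
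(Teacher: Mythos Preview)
Your proposal is correct and follows essentially the same approach as the paper, whose entire proof of this lemma is the single sentence ``The proof follows the same argument as that of Lemma~\ref{lem_d=2_decomp}.'' You have in fact supplied more than the paper does: your local-constancy argument for (a) makes explicit what Lemma~\ref{lem_finitely_many} only implies, and your discussion in (c) correctly flags that the $\sqcup$ must be read as a labelled (signed) union rather than a set-theoretic disjoint union---a point the paper leaves implicit but relies on when it later allows negative summands in the identity.
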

\begin{proof}
The proof follows the same argument as that of Lemma \ref{lem_d=2_decomp}.
\end{proof}

Now we give a proof of Theorem \ref{thm_realhigherBas}.
\begin{proof}[Proof of Theorem \ref{thm_realhigherBas}]
Fix a boundary component $\alpha_j$.
Let $\lambda$ be the Lebesgue measure on $\mathbb{T}$.
We first observe that $\lambda(\bbT) = \ell_\rho(\alpha_j)$ by definition of $\bbT$.
On the other hand, by Lemma \ref{lem_finitely_many}, only finitely many intervals $\pi(\hat{I}_\beta)$ are non-trivial. Combining with Lemma \ref{lem_1}, we have
\begin{align*}
\ell_\rho(\alpha_j) = \lambda(\mathbb{T}) &= \lambda(\pi(\Phi_\rho(\partial_{\infty}V))) + \sum_{q=1}^m \sum_{\beta \in \mathcal{L}_{j,q}} \lambda(\pi(\hat{I}_{\beta}))\\
& = 0+ \sum_{q=1}^m \sum_{\beta \in \mathcal{L}_{j,q}} \log_v |C_\rho(\alpha_j^+,\alpha_j^-,g\alpha_q^+,g\alpha_q^-)|
\end{align*}
where the series on the right hand side is a finite sum. The identity is obtained by summing over $\alpha_j$'s.
\end{proof}

\section{Examples}\label{sec_examples}
In this section, we give examples of projective Anosov representations from the fundamental group of the one-holed torus $\Sigma_{1,1}$ into $\PSL(d,k),d\ge 2$ to demonstrate that the right hand side series of Theorem \ref{thm_realhigherBas} may consists of only positive terms, as well as both positive and negative terms.

\subsection{The case $d=2$}

\begin{example} \label{example_51}
Let $k$ be a non-Archimedean local field whose residue field has cardinality $3$ and $\Sigma_{1,1}$ be the one-holed torus. We give an example of projective Anosov representations from $\pi_1\Sigma_{1,1}$ into $\PSL(2,k)$ where the right hand side series of Theorem \ref{thm_realhigherBas} consists of only positive terms.

Let $\Gamma_1$ be an oriented graph which is a wedge of two oriented circles. Embed $\Gamma_1$ in $\Sigma_{1,1}$ as in Figure \ref{fig_1}. Since $\Sigma_{1,1}$ deformation retracts to the graph $\Gamma_1$, the fundamental group $\pi_1\Sigma_{1,1}$ acts geometrically on the universal cover of $\Gamma_1$, which is the 4-valent tree $\widetilde{\Gamma}_1$.

\begin{figure}[h!] 
\centering
\includegraphics[scale=0.2]{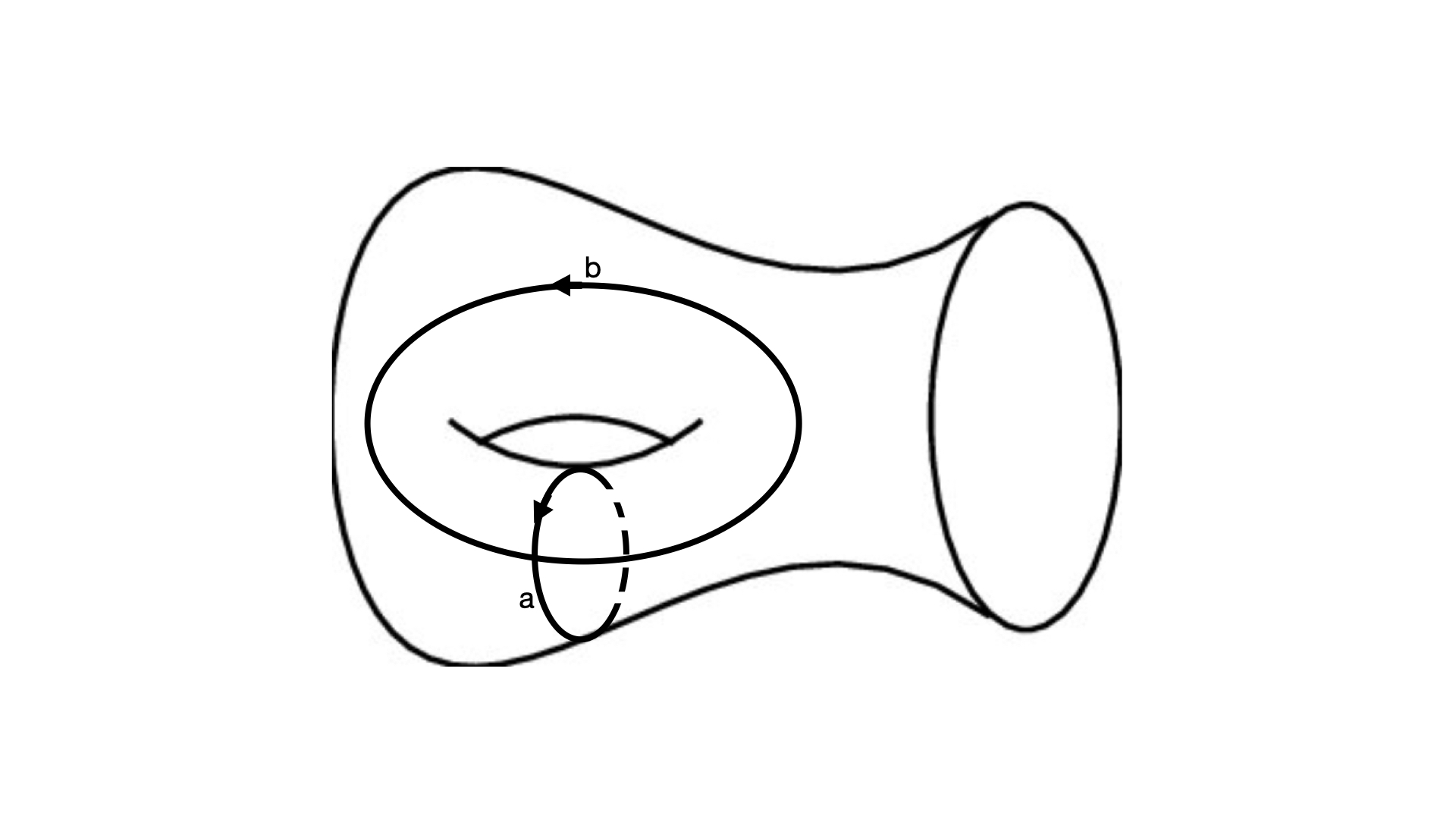}
\caption{$\Gamma_1$ on the one-holed torus $\Sigma_{1,1}$}
\label{fig_1}
\end{figure}

We now describe a projective Anosov representation $\rho_1 \colon \pi_1\Sigma_{1,1} \to \PSL(2,k)$ for which the right hand side series of Theorem \ref{thm_realhigherBas} consists of only positive terms. Let $a,b$ be generators of $\pi_1\Sigma_{1,1}$.
Define
$\rho_1(a)(z) \defeq 3z$ and $\rho_1(b)(z) \defeq \frac{3\frac{z-2}{z-1} -2}{3\frac{z-2}{z-1} -1}$.

The identity in this case is $4 = 1 + 1 + 1 + 1$.
\end{example}

\begin{example} \label{example_52}
Let $k$ be a non-Archimedean local field whose residue field has cardinality $2$.
We now give an example of projective Anosov representations from $\pi_1\Sigma_{1,1}$ into $\PSL(2,k)$ where the right hand side series of Theorem \ref{thm_realhigherBas} consists of both positive and negative terms.

Let $\Gamma_2$ be the oriented dumbbell graph as in Figure \ref{fig_2}. Note that $\Sigma_{1,1}$ deformation retracts to $\Gamma_2$ as there is a homotopy equivalence $\Gamma_2 \to \Sigma_{1,1}$.
Since $\Sigma_{1,1}$ deformation retracts to the graph $\Gamma_2$, the fundamental group $\pi_1\Sigma_{1,1}$ acts geometrically on the universal cover of $\Gamma_2$, which is the trivalent tree $\widetilde{\Gamma}_2$.

\begin{figure}[h!] 
\centering
\includegraphics[scale=0.2]{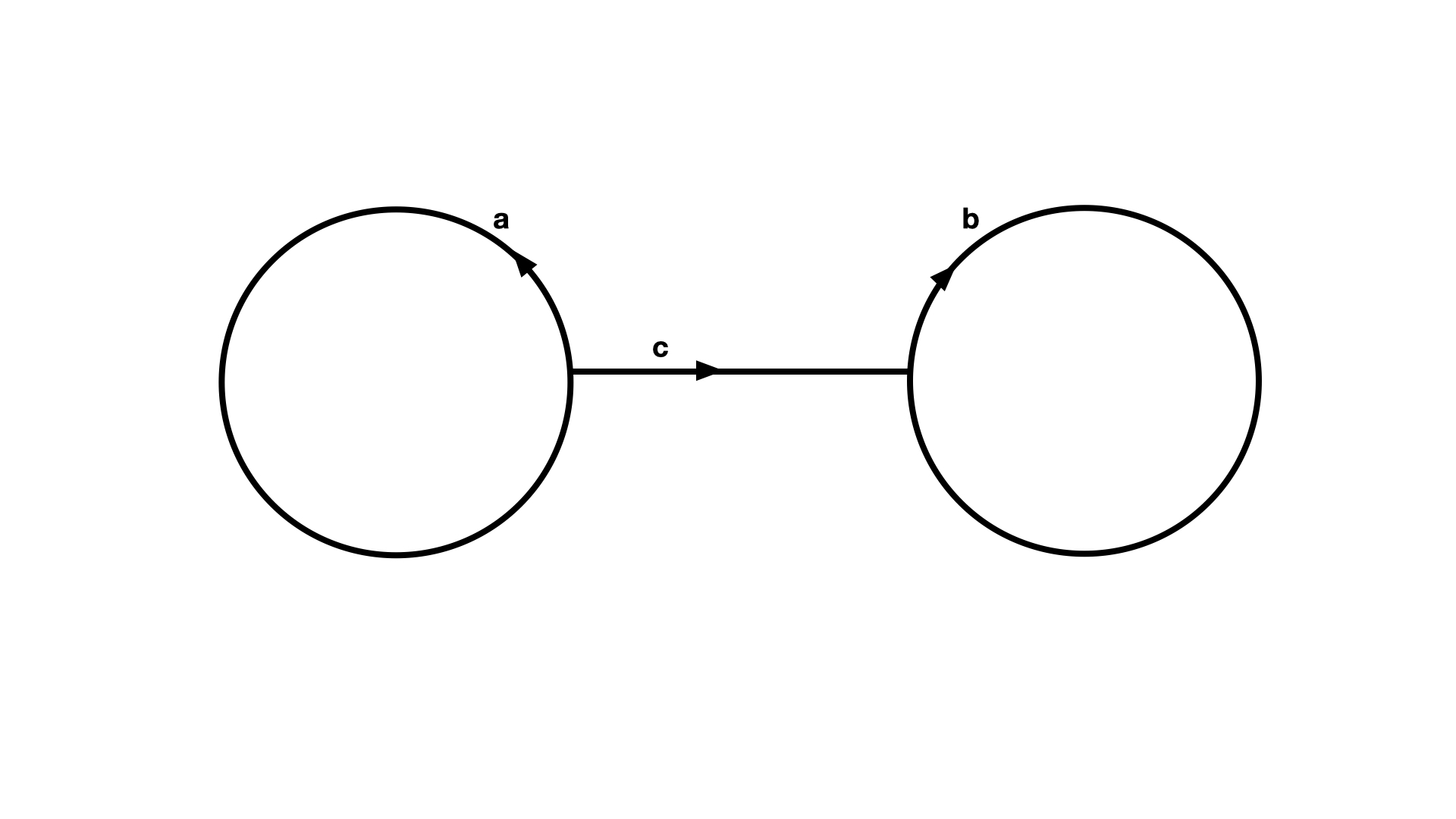}
\caption{The oriented dumbell graph}
\label{fig_2}
\end{figure}

Let $a$ and $cbc^{-1}$ be the generators of the fundamental group of the dumbbell graph. Then the boundary of the one-holed torus is represented by $acbc^{-1}a^{-1}cb^{-1}c^{-1}$. 
Define $\rho_2 \colon \pi_1\Sigma_{1,1} \to \PSL(2,k)$ by
$\rho_2(a)(z) \defeq 2z$ and 
$\rho_2(cbc^{-1})(z) \defeq \frac{5z-3}{z+1}$ (fixing 1 and 3). The quotient of $\widetilde{\Gamma}_2/\langle \rho_2(a), \rho_2(cbc^{-1})\rangle$ is indeed the dumbbell graph.

The identity in this case is
$8 = (3 -1) + (3 - 1) + (3 -1) + (3 - 1)$.
\end{example}

\subsection{The case $d\ge 3$}
Let $d \ge 3$ be an integer.
Recall the Veronese embedding $\iota \colon \PSL(2,k) \to \PSL(d,k)$. Let $x^{d-1},x^{d-1}y,\ldots, y^{d-1}$ be a basis of the real $d$-dimensional vector space of homogeneous polynomials in 2 variables of degree $\le d-1$. Let 
$g = \begin{bmatrix}
    a & b \\
    c& d\\
\end{bmatrix} \in {\rm SL}(2,k)$. Then $\iota(g) \in {\rm SL}(d,\bbR)$ is the matrix whose action on the basis elements is $\iota(g)(x^{d-i-1}y^i) \defeq (ax+by)^{d-i-1}(bx+dy)^i$.

\begin{lem} \label{lem_cr_same}
Let $\rho \colon \pi_1\Sigma \to \PSL(2,k)$ be a projective Anosov representation. Let $x_1,x_2,x_3,x_4$ be 4 distinct points on $\partial \pi_1\Sigma$. Then we have $(d-1)\log_v |C_\rho(x_1,x_2,x_3,x_4)| = \log_v |C_{\iota\circ \rho}(x_1,x_2,x_3,x_4)|$.
\end{lem}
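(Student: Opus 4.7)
The plan is to make explicit what the Veronese embedding does to the two limit maps, and then to observe that the Falbel--Guilloux--Will formula (equation \eqref{eq_crossratio} of Lemma \ref{lem_cr_FGW}) turns the composed cross ratio into the $(d-1)$-st power of the original one.

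First I will identify $\zeta^1_{\iota\circ\rho}$ and $\zeta^{d-1}_{\iota\circ\rho}$ in concrete terms. Writing $V=k^2$ and identifying the target $k^d$ with $\mathrm{Sym}^{d-1}V$ via the basis in the statement, one checks that for each $x\in\partial\pi_1\Sigma$ with $\zeta^1_\rho(x)=[v_x]\in\mathbb{P}(V)$, the Cartan attractor of $\iota(\rho(\gamma_n))$ tends to $[v_x^{d-1}]$, so $\zeta^1_{\iota\circ\rho}(x)=[v_x^{d-1}]$. Dually, the osculating hyperplane of the rational normal curve at $v_x^{d-1}$ is $v_x\cdot\mathrm{Sym}^{d-2}V$, and by the standard calculation $\langle \psi^{d-1},v\cdot u\rangle=\psi(v)\cdot\langle\psi^{d-2},u\rangle$ this hyperplane is exactly the kernel of $\psi_x^{d-1}\in\mathrm{Sym}^{d-1}V^*\cong(k^d)^*$ for any $\psi_x\in V^*$ with $\psi_x(v_x)=0$. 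Thus $\zeta^{d-1}_{\iota\circ\rho}(x)=[\psi_x^{d-1}]$, where $[\psi_x]=\zeta^{1}_\rho(x)$ viewed as a hyperplane class in $\mathbb{P}(V^*)$ (this is the $d=2$ incarnation of $\zeta^{d-1}_\rho=\zeta^1_\rho$).

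Next I pick lifts $\bar\omega_i := v_{x_i}^{d-1}\in\mathrm{Sym}^{d-1}V$ and $\bar\varphi_i := \psi_{x_i}^{d-1}\in\mathrm{Sym}^{d-1}V^*$. Using the natural pairing $\langle\psi_1\cdots\psi_{d-1},w_1\cdots w_{d-1}\rangle=\sum_\sigma\prod_i\psi_i(w_{\sigma(i)})/(d-1)!$ (restricted to pure powers), one gets
\[
\bar\varphi_i(\bar\omega_j)=\psi_{x_i}(v_{x_j})^{d-1}.
\]
Plugging these into \eqref{eq_crossratio} gives
\[
C_{\iota\circ\rho}(x_1,x_2,x_3,x_4)=\frac{\psi_{x_1}(v_{x_3})^{d-1}\,\psi_{x_2}(v_{x_4})^{d-1}}{\psi_{x_1}(v_{x_4})^{d-1}\,\psi_{x_2}(v_{x_3})^{d-1}}=\Bigl(C_\rho(x_1,x_2,x_3,x_4)\Bigr)^{d-1},
\]
since the numerators and denominators are exactly the $(d-1)$-st powers of those appearing when the same formula is applied for $d=2$. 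Taking $|\cdot|$ and then $\log_v$ of both sides yields the claimed identity, and multiplicativity of $|\cdot|$ shows that no (non-Archimedean) absolute value issue arises.

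The main thing to be careful about is the first step, i.e., confirming that the limit maps of $\iota\circ\rho$ really are the Veronese and dual-Veronese lifts of the limit map of $\rho$. I would settle this by noting that $\iota\circ\rho$ is $\{1,d-1\}$-Anosov (the Veronese embedding pulls back Cartan projections in a controlled way), so its limit maps exist and are continuous, $\rho$-equivariant, and transverse; then by uniqueness of such maps they must agree with the continuous equivariant transverse maps $x\mapsto [v_x^{d-1}]$ and $x\mapsto[\psi_x^{d-1}]$ constructed above. Everything else is the bilinear algebra recorded in the previous paragraph.
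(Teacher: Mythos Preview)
Your proof is correct and follows essentially the same approach as the paper's: both identify the limit maps of $\iota\circ\rho$ as the Veronese and dual-Veronese lifts of those of $\rho$, establish the key pairing identity $\bar\varphi_\iota(\bar\omega_\iota)=(\bar\varphi(\bar\omega))^{d-1}$, and then apply formula~\eqref{eq_crossratio} to conclude that the cross ratio is raised to the $(d-1)$-st power. Your write-up is considerably more detailed than the paper's brief sketch---in particular you actually justify, via uniqueness of continuous equivariant transverse maps, why the limit maps of $\iota\circ\rho$ are the ones you describe.
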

\begin{proof}
We use notations from Lemma \ref{lem_cr_FGW}. Let $\phi \defeq \zeta^1(x_1) \in \bbP(k^d)$ and let $\omega \defeq \zeta^{d-1}(x_3) \in \bbP(k^{d*})$.
Let $\bar{\phi}$ and $\bar{\omega}$ be lifts of $\phi$ and $\omega$. Let $\bar{\phi}_{\iota}$ be the lift of $\zeta^{1}_{\iota \circ \rho}(x_1)$ and $\bar{\omega}_{\iota}$ be the lift of $\zeta^{d-1}_{\iota \circ \rho}(x_3)$.
Then we have
$\bar{\phi}_{\iota}(\bar{\omega}_\iota) = (\bar{\phi}(\bar{\omega}))^{d-1}$. Therefore every term in the formula in Lemma \ref{lem_cr_FGW} is raised to the power $(d-1)$ after composing the representation $\rho$ by $\iota$. Taking log of the cross ratio, the conclusion follows.
\end{proof}

\begin{example}
Let $\rho_1 \colon \pi_1\Sigma_{1,1} \to \PSL(2,k)$ and $\rho_2 \colon \pi_1\Sigma_{1,1} \to \PSL(2,k)$ be the representations as in Examples \ref{example_51} and \ref{example_52} respectively.
By Lemma \ref{lem_cr_same}, we see that $\iota \circ \rho_1 \colon \pi_1\Sigma_{1,1} \to \PSL(d,k)$ is a projective Anosov representation for which the right hand side series of Theorem \ref{thm_realhigherBas} consists of only positive terms.
Similarly, $\iota \circ \rho_2 \colon \pi_1\Sigma_{1,1} \to \PSL(d,k)$ is a projective Anosov representation for which the right hand side series of Theorem \ref{thm_realhigherBas} consists of both positive and negative terms.
\end{example}

\end{document}